\documentclass[11pt,CJK]{article}
\usepackage[numbers,sort&compress]{natbib}
\usepackage{enumerate}
\usepackage{amscd}
\usepackage{array}
\usepackage{amsmath}
\usepackage{latexsym}
\usepackage{amsfonts}
\usepackage{amssymb}
\usepackage{amsthm}
\usepackage{verbatim}
\usepackage{mathrsfs}
\usepackage{enumerate}
\usepackage{hyperref}

 \oddsidemargin .5cm \evensidemargin .5cm \marginparwidth 40pt
 \marginparsep 10pt \topmargin 0.30cm
 \headsep1pt
 \headheight 0pt
 \textheight 9.1in
 \textwidth 6in
 \sloppy

 \setlength{\parskip}{8pt}

\theoremstyle{plain}\newtheorem{definition}{Definition}[section]
\theoremstyle{definition}\newtheorem{theorem}{Theorem}[section]
\theoremstyle{plain}\newtheorem{lemma}[theorem]{Lemma}
\theoremstyle{plain}\newtheorem{coro}[theorem]{Corollary}
\theoremstyle{plain}
\theoremstyle{remark}\newtheorem{remark}{Remark}[section]
\theoremstyle{definition}
\theoremstyle{plain}
\usepackage{xcolor}
\newcommand{\wred}[1]{\textcolor{black}{#1}}
\newcommand{\hs}[1]{\textcolor{red}{#1}}

\newcommand{\norm}[1]{\left\|#1\right\|}
\newcommand{\Div}{\mathrm{div}\,}
\newcommand{\B}{\Big}

\newcommand{\be}{\begin{equation}}
\newcommand{\ee}{\end{equation}}
 \newcommand{\ba}{\begin{aligned}}
 \newcommand{\ea}{\end{aligned}}

\providecommand{\bysame}{\leavevmode\hbox to3em{\hrulefill}\thinspace}
  \newcommand{\f}{\frac}
    
  \newcommand{\ben}{\begin{enumerate}}
   \newcommand{\een}{\end{enumerate}}

\newcommand{\ti}{\nabla}

\newcommand{\Rmnum}[1]{\expandafter\@slowromancap\romannumeral #1@}

\allowdisplaybreaks

\numberwithin{equation}{section}
%%%%%%%%%%%%%%%%%%%%%%%%%%%%%%%%%%%%%%%%%%%%%%%%%%%%%%%%%%%%%%%%%%%%%%%%%%%%%%%%%%%%%%%%%%%%%%%%%%%%
\begin{document}%\begin{CJK*}{GBK}{fs}
%%%%%%%%%%%%%%%%%%%%%%%%%%%%%%%%%%%%%%%%%%%%%%%%%%%%%%%%%%%%%%%%%%%%%%%%%%%%%%%%%%%%%%%%%%%%%%%%%%%%
\title{%An elementary proof of  interior regularity criteria involving two  components in the 3D Navier-Stokes equations
 $\varepsilon$-regularity criteria in Lorentz spaces   to the 3D Navier-Stokes equations }
\author{ Yanqing Wang\footnote{ Department of Mathematics and Information Science, Zhengzhou University of Light Industry, Zhengzhou, Henan  450002,  P. R. China Email: wangyanqing20056@gmail.com},\;~ Wei Wei\footnote{Center for Nonlinear Studies, School of Mathematics, Northwest University, Xi'an, Shaanxi 710127, China Email: ww5998198@126.com } ~  and\, Huan Yu \footnote{ School of Applied Science, Beijing Information Science and Technology University, Beijing, 100192, P. R. China Email:  yuhuandreamer@163.com}
 }
\date{}
\maketitle
\begin{abstract}
 In  this paper,  we  are concerned with regularity of suitable weak solutions of the 3D Navier-Stokes equations in Lorentz spaces. We obtain $\varepsilon$-regularity criteria in terms of either the velocity, the gradient of the velocity,  the vorticity,  or deformation tensor   in Lorentz spaces. As an application, this allows us to extend  the result involving Leray's blow up rate in time, and to show that the number of singular points of weak solutions belonging to  $ L^{p,\infty}(-1,0;L^{q,l}(\mathbb{R}^{3})) $ and $ {2}/{p}+{3}/{q}=1$ with $3<q<\infty$ and $q\leq l <\infty$ is finite.
 \end{abstract}
\noindent {\bf MSC(2000):}\quad 76D03, 76D05, 35B33, 35Q35 \\\noindent
{\bf Keywords:} Navier-Stokes equations;   weak solutions;   regularity \\
%%%%%%%%%%
\section{Introduction}
\label{intro}
\setcounter{section}{1}\setcounter{equation}{0}
We focus our attention on the 3D  Navier-Stokes system
\be\left\{\ba\label{NS}
&u_{t} -\Delta  u+ u\cdot\ti
u  +\nabla \Pi=0, \\
&\Div u=0,\\
&u|_{t=0}=u_0,
\ea\right.\ee
 where \wred{the unknown vector $u=u(x,t)$ describes the flow  velocity field}, the scalar function $\Pi$ represents the   pressure.
 The  initial datum $u_{0}$ is given and satisfies the divergence-free condition.

The global   weak  solutions to   the 3D  Navier-Stokes equations  was constructed by Leray  \cite{[Leray1]}  for
the Cauchy problem and by Hopf \cite{[Hopf]} for the Dirichlet problem. However, whether such a weak solution is unique or regular is still an outstanding open problem.
  In
a pioneering   work  \cite{[Serrin]}, Serrin   presented a regularity criteria
 to  the  weak solutions to the 3D Navier-Stokes equations, namely,  the
 Leray-Hopf weak  solutions $u$  is bounded in    some neighbourhood of point $(0,0)$  if   $u$ satisfies
\be\label{serrin}
u\in  L^{p} (-1,0;L^{q}( B(1))) ~~~ \text{with}~~~~2/p+3/q<1, ~~q>3.
\ee
Here and in what follows, $B(\rho)$ denotes the ball of center $0$ and radius $\rho$. The critical case $2/p+3/q=1$ in \eqref{serrin} was handled by
Struwe  in \cite{[Struwe]}. The limiting case  $L^{\infty} (-1,0;L^{3}( B(1)))$ was solved by Escauriaza,  Seregin and \v{S}ver\'{a}k  in \cite{[ESS]}.
Note that  the  norm  $\|\cdot\|_{L_t^{p}L_x^{q}}$ with $2/p+3/q=1$ is scaling invariant for $u$ under the natural scaling of \eqref{NS}.

It is well-known that
Lorentz spaces $L^{r,s}(\Omega)$ $(s\geq r)$ are larger than the Lebesgues
spaces  $L^{r}(\Omega)$. A natural question arises whether results similar to  \eqref{serrin} hold in Lorentz spaces. Indeed,
Takahashi \cite{[Takahashi]}  gave the first affirmative answer  and improved \eqref{serrin} to allow the time direction to belong  to Lorentz spaces, more precisely,
 \be\label{Takahashi}
u\in L^{p,\infty} (-1,0;L^{q}( B(1))) ~~~ \text{with}~~~~2/p+3/q=1, ~~3<q<\infty.
\ee
Afterwards, it is shown that
an alternative assumption
of \eqref{Takahashi} is
 \be\label{CP}
u \in L^{p,\infty} (-1,0;L^{q,\infty}( B(1)))  ~~~ \text{with}~~~~2/p+3/q=1, ~~ 3<q<\infty,
\ee which is due to Chen and
Price \cite{[CP]}  and Sohr \cite{[Sohr]}.
The endpoint case $q=3$ in \eqref{CP} was considered by
 Kozono  and his coauthor Kim in \cite{[Kozono],[KK]}.
%5Wang and Zhang and Phuc
%\be\label{WZ}u \in L^{ \infty} (-1,0;L^{3,l}( \mathbb{R}^{3})), 3<l<\infty.
%\ee
 % Choe,  Wolf,  Yang
 % \be\label{serrin}
%u\in  L^{\infty} (-1,0;L^{3,\infty}( B(1))) ~~~ \text{and}~~~~\f{1}{r^{3}}\mu\{x\in B(x,r):|u|>\f{\varepsilon}{r}\}\leq\varepsilon.
%\ee

In an interesting work,  Gustafson, Kang and Tsai \cite{[GKT]} found that all regularity criteria \eqref{serrin}-\eqref{CP} and the results in \cite{[Kozono],[KK],[Struwe]}  can be derived from the following $\varepsilon$-regularity criteria of suitable weak solutions to   system \eqref{NS}:  if  there exists an absolute constant $\varepsilon$ such that
\begin{align}\label{tsai1}
&\limsup_{\varrho\to 0 }\,\, \varrho^{1- \frac 2p- \frac 3 q}
\|u-\overline{u}_{\varrho}\|_{L^{p}(-\varrho^{2},0;L^{q}(B(\varrho))) } \leq \varepsilon,\quad
1\leq 2/p +3/q\leq 2,\;  1\leq p, q \leq \infty,
 \end{align}
 then $u$  is bounded in    some neighbourhood of point $(0,0)$.
Here, suitable weak solutions of  the  3D Navier-Stokes equations  were  introduced by Scheffer \cite{[Scheffer1],[Scheffer2]} and Caffarelli,
Kohn and Nirenberg \cite{[CKN]} to estimate
 the size of the singular points in the Navier-Stokes system.
 A point $ (x,t)$   is said to be regular if $u$ belongs to $ L^{\infty}$ in a neighborhood of $(x,t)$.
Otherwise, it is   singular. Denote the possible singular points set in system \eqref{NS} by $\mathcal{S}.  $
 Two  kinds  of  $\varepsilon$-regularity criteria to the suitable weak solutions of \eqref{NS} were derived in \cite{[CKN]}: $(0,0)$ is a regular point provided that one of the two following conditions holds, for an absolute positive constant $\varepsilon$,
\begin{equation}
\label{ckn1}		
\norm{u}_{L^{3}(-1,0;L^{3}(B(1))} +\|u\Pi\|_{L^{1}(-1,0;L^{1}(B(1))} +\|\Pi\|_{L^{\f54}(-1,0;L^{1}(B(1))}\leq \varepsilon.
\end{equation}
and
\begin{equation}
\label{ckn2}
\limsup_{\varrho\rightarrow0}  \varrho^{-1}\norm{\nabla u}_{ L^{2}(-\varrho^{2},0;L^{2}(B(\varrho))) }^{2} \leq \varepsilon.
\end{equation}
The difference between  \eqref{ckn1} and \eqref{ckn2} is that the    former requires only a radius (one scale) and the   latter needs infinite radii (finitely many scales).
Since then, there have been
extensive mathematical investigations
 of regularity
of suitable weak solutions and many   regularity conditions are established (see
  \cite{[HWZ],[WW16],[GKT],[TX],[WWZ],[WZ]} and references therein).
However, almost all known results involving $\varepsilon$-regularity criteria are discussed in usual Lebesgues spaces and
there is a little literature for investigating $\varepsilon$-regularity criteria in Lorentz spaces. An objective of this paper is to study the regularity of suitable weak solutions in Lorentz spaces.

Before  formulating  our results, we
recall that
 the gradient of  the velocity field $\nabla u$ can be split   into a symmetric part $\mathcal{D}$ and an antisymmetric part $\Omega$, that is,
\be\ba\label{tidufenjie} \nabla u&=\f{1}{2}(\nabla u+\nabla u^{^{\text{T}}})+\f{1}{2}(\nabla u-\nabla u^{^{\text{T}}})\\
&=:\mathcal{D}(u)+\Omega(u).\ea\ee
  $\mathcal{D}(u)$ and  $\Omega(u)$ are usually called as the  deformation tensor or rate-of-strain tensor and  the rotation   tensor, respectively. (See eg. \cite{[MB]}).
Regularity criteria in terms of deformation tensor in fluid mechanics can be found in \cite{[HLX],[KT],[Ponce],[MNS],[WW16]}.
The first result can be stated as follows.
\begin{theorem}\label{the1.1}Let $(u,\,\Pi)$ be a suitable weak  solution  to \eqref{NS} in $Q(\varrho)$.
Then there exists a positive constant $\varepsilon_{1}$ such that $(0,0)$ is regular point provided that,
 one of    the following four conditions holds
\begin{enumerate}[(1)]
 \item \label{them1.1}$u \in L^{p}(-\varrho^{2},0; L ^{q,\infty}(B(\varrho))) $ and
    \be\label{u1} \limsup\limits_{\varrho\rightarrow0}
 \varrho^{-1 }
\|u-\overline{ u}_{\varrho}  \|_{L^{p}(-\varrho^{2},0; L ^{q,\infty}(B(\varrho)))} \leq\varepsilon_{1},
\ee
 where
\be\label{u11}2/p+3/q =2,~~~~~\text{with} ~~1\leq  p\leq\infty.
\ee
\be \label{u2}\text{or}~~ u \in L^{p,\infty}(-\varrho^{2},0; L ^{q,\infty}(B(\varrho)))~~\text{with}~~\limsup\limits_{\varrho\rightarrow0}
 \varrho^{1-\f{2}{p}-\f{3}{q} }
\|u-\overline{ u_{\varrho} }\|_{L^{p,\infty}(-\varrho^{2},0; L ^{q,\infty}(B(\varrho)))} \leq\varepsilon_{1},
\ee
 where
\be\label{u22}2/p+3/q <2,~~~~~\text{with}~~ 1<  p\leq\infty.
\ee
 \item  \label{them1.2}  $  \nabla u \in L^{p}(-\varrho^{2},0; L ^{q,\infty}(Q(\varrho)))$ and
     \be\label{tiduu1} ~~~\limsup\limits_{\varrho\rightarrow0}
 \varrho^{-1 }
\|\nabla u \|_{L^{p}(-\varrho^{2},0; L ^{q,\infty}(B(\varrho)))} \leq\varepsilon_{1},
\ee
 where
\be\label{tiduu11}2/p+3/q =3,~~~~~\text{with} ~~~ 1< p<\infty.
\ee
\be\label{tiduu2}\text{or}~~~~~~~~\nabla u \in L^{p,\infty}(-\varrho^{2},0; L ^{q,\infty}(B(\varrho)))~~~\text{with}~~~\limsup\limits_{\varrho\rightarrow0}
 \varrho^{2-\f{2}{p}-\f{3}{q} }
\|\nabla u \|_{L^{p,\infty}(-\varrho^{2},0; L ^{q,\infty}(B(\varrho)))} \leq\varepsilon_{1},
\ee
 where
\be\label{tiduu22}2/p+3/q <3,~~~\text{with}~~ 1< p<\infty.
\ee\item \label{3inth1.1}  $\text{curl\,} u  \in L^{p}(-\varrho^{2},0; L ^{q,\infty}(B(\varrho)))~~~\text{and}$
  \be\label{xduu1}~~~\limsup\limits_{\varrho\rightarrow0}
 \varrho^{-1 }
\|\text{curl\,} u  \|_{L^{p}(-\varrho^{2},0; L ^{q,\infty}(B(\varrho)))} \leq\varepsilon_{1},
\ee
 where
\be\label{xduu11}2/p+3/q =3,~~~~~\text{with} ~~~ 1< p<\infty.
\ee
\be\label{xduu2}\text{or}~~ \text{curl\,} u  \in L_{t}^{p,\infty}L ^{q,\infty}(B(\varrho))~  \text{with}~ \limsup\limits_{\varrho\rightarrow0}
 \varrho^{2-\f{2}{p}-\f{3}{q} }
\|\text{curl\,} u \|_{L^{p,\infty}(-\varrho^{2},0; L ^{q,\infty}(B(\varrho)))} \leq\varepsilon_{1},
\ee
 where
\be\label{xduu22}2/p+3/q <3,~~~\text{with}~~ 1< p<\infty.\ee
\item \label{4inth1.1}   $\mathcal{D}(u)  \in L^{p}(-\varrho^{2},0; L ^{q,\infty}(B(\varrho)))$  and \be\label{dz1} \limsup\limits_{\varrho\rightarrow0}
 \varrho^{-1 }
\|\mathcal{D}(u)  \|_{L^{p}(-\varrho^{2},0; L ^{q,\infty}(B(\varrho)))} \leq\varepsilon_{1},
\ee
 where
\be\label{dz11}2/p+3/q =3,~~~~~\text{with} ~~~ 1\leq p<\infty.
\ee
\be\label{dz2}\text{or}~~~~~~\mathcal{D}(u)  \in L^{p,\infty}(-\varrho^{2},0; L ^{q,\infty}(B(\varrho)))~~\text{with}~~\limsup\limits_{\varrho\rightarrow0}
 \varrho^{2-\f{2}{p}-\f{3}{q} }
\|\mathcal{D}(u) \|_{L^{p,\infty}(-\varrho^{2},0; L ^{q,\infty}(B(\varrho)))} \leq\varepsilon_{1},
\ee
 where
\be\label{dz22}2/p+3/q <3,~~~\text{with}~~ 1< p<\infty.\ee

\end{enumerate}
\end{theorem}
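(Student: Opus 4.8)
\emph{Strategy.} The plan is to derive each of the four conditions from a criterion already available in the literature — the one‑scale Caffarelli–Kohn–Nirenberg criterion \eqref{ckn2} or the multiscale criterion \eqref{tsai1} of Gustafson–Kang–Tsai — except for the scale‑critical velocity statement \eqref{them1.1}, for which one must rerun the CKN iteration of the local energy inequality with weak‑Lebesgue norms. The elementary workhorse is that on a ball a weak‑Lebesgue norm costs only an explicit, scaling‑correct power of the radius when traded for a slightly smaller Lebesgue norm: for $1\le a<b\le\infty$,
\[
\|f\|_{L^{a}(B(\varrho))}\le C\varrho^{3(1/a-1/b)}\|f\|_{L^{b,\infty}(B(\varrho))},\qquad \|g\|_{L^{a}(-\varrho^{2},0)}\le C\varrho^{2(1/a-1/b)}\|g\|_{L^{b,\infty}(-\varrho^{2},0)};
\]
together with Hölder's inequality and the Lorentz Sobolev–Poincaré inequality $\|f-\overline f_{B(\varrho)}\|_{L^{b^{*},\infty}(B(\varrho))}\le C\|\nabla f\|_{L^{b,\infty}(B(\varrho))}$, $1/b^{*}=1/b-1/3$, these move the hypotheses between the velocity, gradient, $\mathrm{curl}$ and deformation forms without changing the scaling exponent.

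\emph{Reductions.} First, \eqref{them1.2} implies \eqref{them1.1}: the constraint $1<p<\infty$ in \eqref{tiduu11} forces $q\in(1,3)$, and the slicewise Sobolev–Poincaré inequality gives $\varrho^{-1}\|u-\overline u_{\varrho}\|_{L^{p}(-\varrho^{2},0;L^{q^{*},\infty}(B(\varrho)))}\le C\varrho^{-1}\|\nabla u\|_{L^{p}(-\varrho^{2},0;L^{q,\infty}(B(\varrho)))}$ with $2/p+3/q^{*}=2$; likewise \eqref{tiduu2} maps into \eqref{u2}. Next, for divergence‑free $u$ one has $-\Delta u=\mathrm{curl}\,\mathrm{curl}\,u=-2\,\mathrm{div}\,\mathcal{D}(u)$, so applying Calderón–Zygmund theory on $L^{q,\infty}$ ($1<q<\infty$) to $\psi(u-\overline u_{2\varrho})$ with $\psi\equiv1$ on $B(\varrho)$, $\mathrm{supp}\,\psi\subset B(2\varrho)$, gives $\|\nabla u\|_{L^{q,\infty}(B(\varrho))}\le C\|\mathrm{curl}\,u\|_{L^{q,\infty}(B(2\varrho))}+C\varrho^{-1}\|u-\overline u_{2\varrho}\|_{L^{q,\infty}(B(2\varrho))}$, and the same with $\mathcal{D}(u)$; taking the time‑norm and estimating the last term through the local embeddings and Poincaré shows it is controlled, up to a harmless power of $\varrho$, by the CKN energy quantity $E(2\varrho)=\varrho^{-1}\iint_{Q(2\varrho)}|\nabla u|^{2}$. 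Thus \eqref{3inth1.1} and \eqref{4inth1.1} reduce to \eqref{them1.2} up to an energy term to be absorbed below, and one is left with \eqref{them1.1} and the strictly subcritical statements.

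\emph{The subcritical and the critical velocity cases.} In \eqref{u2}, \eqref{tiduu2}, \eqref{xduu2}, \eqref{dz2} the passage from the weak‑Lebesgue norms to nearby Lebesgue norms costs a \emph{positive} power $\varrho^{\delta}$ because the scaling sum is strictly below critical, so the hypothesis sends the relevant Lebesgue scale‑invariant quantity to $0$; landing in the range $1\le2/p_{0}+3/q_{0}\le2$ one invokes \eqref{tsai1} (for $2/p_{0}+3/q_{0}<1$ a further Hölder step puts one at the Serrin exponent and \cite{[Struwe]} applies). The critical statement \eqref{them1.1} with \eqref{u11} is the core: here a direct embedding $L^{q,\infty}_{x}\hookrightarrow L^{q_{0}}_{x}$ at the same scale would push $2/p+3/q_{0}$ above $2$, outside the range of \eqref{tsai1}, so one reruns the CKN scheme. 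From the local energy inequality and the local pressure decomposition $\Pi=\Pi_{1}+\Pi_{2}$ ($\Pi_{1}$ a Calderón–Zygmund transform of $u\otimes u$, $\Pi_{2}$ harmonic inside) one derives, for $A(\varrho)=\sup_{t}\varrho^{-1}\int_{B(\varrho)}|u|^{2}$ and $E(\varrho)$ as above, a recursion of the form $A(\theta\varrho)+E(\theta\varrho)\le C\theta^{2}\bigl(A(\varrho)+E(\varrho)\bigr)+C\theta^{-\gamma}G(\varrho)$, where the forcing $G(\varrho)$ gathers the cubic term $\iint_{Q(\varrho)}|u-\overline u_{\varrho}|^{3}$ and the pressure term it produces. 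One estimates $G(\varrho)$ by Hölder together with the Lorentz interpolation inequality, peeling off one factor $u-\overline u_{\varrho}$ into the hypothesis norm $\varrho^{-1}\|u-\overline u_{\varrho}\|_{L^{p}_{t}L^{q,\infty}_{x}}$ and absorbing the remaining ones into $A$ and $E$, so that $\limsup_{\varrho\to0}G(\varrho)\le C\varepsilon_{1}^{\alpha}$ for some $\alpha>0$. Fixing $\theta$ with $C\theta^{2}<\tfrac12$ and then $\varepsilon_{1}$ small, iteration gives $\limsup_{\varrho\to0}\bigl(A(\varrho)+E(\varrho)\bigr)\le\varepsilon$, so \eqref{ckn2} yields regularity of $(0,0)$; the same iteration also swallows the energy leftover from the $\mathrm{curl}$ and $\mathcal{D}$ reductions.

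\emph{Main obstacle.} The crux is the bound on $G(\varrho)$ in the critical velocity case: the cubic nonlinearity and the pressure term it induces are not a priori small, and one must split them so that exactly one factor carries the small weak‑Lebesgue norm while the net power of $\varrho$ beats the linear coefficient $C\theta^{2}$, so that the recursion contracts; the borderline subcase $p=1$, $q=\infty$ (so $L^{q,\infty}=L^{\infty}$) needs extra care since the time weight is only integrable to the first power. Controlling the pressure in the weak‑Lebesgue setting — the singular integrals are still bounded but Hölder is no longer sharp — and handling the cut‑off error terms in the local Biot–Savart estimates for \eqref{3inth1.1}, \eqref{4inth1.1} are the remaining technicalities; none touches the scaling, which is why all four conditions end with the same exponents and the same $\varepsilon_{1}$.
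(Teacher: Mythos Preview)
Your reductions for the subcritical cases \eqref{u2}, \eqref{tiduu2}, \eqref{xduu2}, \eqref{dz2} and for \eqref{them1.2}$\Rightarrow$\eqref{them1.1} via the Lorentz Poincar\'e--Sobolev inequality match the paper's. The gap is in the core case, the critical velocity criterion \eqref{u1}. You assert a recursion
\[
A(\theta\varrho)+E(\theta\varrho)\le C\theta^{2}\bigl(A(\varrho)+E(\varrho)\bigr)+C\theta^{-\gamma}G(\varrho),
\]
but the local energy inequality with a standard cutoff cannot produce a contractive linear coefficient: the term $\iint|u|^{2}(\phi_{t}+\Delta\phi)$ is of size $C\varrho^{-2}\iint_{Q(\varrho)}|u|^{2}\le C\varrho\,A(\varrho)$, so after dividing by $\theta\varrho$ you obtain $C\theta^{-1}A(\varrho)$, not $C\theta^{2}A(\varrho)$. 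No $\theta<1$ makes this contract, and the term cannot be pushed into $G(\varrho)$ since it is not multiplied by $\varepsilon_{1}$. The paper sidesteps this by iterating on $F(\mu)=E_{3}(\mu)+D_{3/2}(\mu)$ instead of on $A+E$. There the contractive coefficients come from \emph{averaging} (the $(\mu/\rho)^{1}$ gain in Lemma~\ref{ineq}, via the $|\bar u_{\rho}|^{3}$ term in \eqref{lem2.31}) and from \emph{harmonicity} of the nonlocal pressure piece (the $(\mu/\rho)^{5/2}$ gain in Lemma~\ref{presure}), not from the energy inequality. The small factor $\varepsilon_{1}$ then sits in front of the growing $(\rho/\mu)^{2}$ term through the peeled-off estimate \eqref{key2.9}, giving $F(\lambda\rho)\le\theta F(\rho)+C\varepsilon_{1}$; the conclusion uses the one-scale criterion \eqref{optical}, not \eqref{ckn2}.

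A related issue affects your reduction of \eqref{3inth1.1} and \eqref{4inth1.1}. Your local Calder\'on--Zygmund bound leaves the commutator $C\varrho^{-1}\|u-\overline u_{2\varrho}\|_{L^{q,\infty}(B(2\varrho))}$, which by Poincar\'e is of the same order as $\|\nabla u\|_{L^{q,\infty}(B(2\varrho))}$ --- exactly the quantity on the left --- so it is not ``controlled by $E(2\varrho)$'' in any way that closes. The paper's Lemma~\ref{lemma3.1} instead writes $\partial_{k}u=\text{(Riesz transforms of }\phi w\text{)}+H$ with $H$ harmonic on the inner ball, uses the interior harmonic estimate to gain a factor $(\mu/\rho)^{3}$ in front of $\|\nabla u\|_{L^{q,\infty}(B(\rho))}$, and iterates \emph{that} recursion before appealing to part \eqref{them1.2}. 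The common theme is that the needed smallness of the linear coefficient never comes from the energy inequality; it comes from harmonicity or averaging decay, and your sketch does not invoke either.
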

\begin{remark}
Theorem \ref{the1.1} is an improvement of \eqref{tsai1}, \eqref{ckn2} and corresponding result in \cite{[TX],[GKT],[WW16]}.
\end{remark}
We present some comments on   Theorem \ref{the1.1}.
Thanks to inclusion \eqref{Inclusion} on bounded domain in Lorentz spaces, we observe that \eqref{u2}, \eqref{tiduu2}, \eqref{xduu2}, \eqref{dz2} are just the straightforward  consequences of known results in \cite{[GKT],[WW16]}. Here, we give the proof of \eqref{u2}  and   leave others to the  interested  readers.
Indeed, for $(p,q)$ satisfying \eqref{u22}, there exist $\delta_{1},\delta_{2}>0$ such that
$$\f{2}{p-\delta_{1}}+\f{3}{q-\delta_{2}} <2.$$
Then, by means of inclusion \eqref{Inclusion}, we have
$$\|u\|_{L^{p-\delta_{1}}(-\varrho^{2},0;L^{q-\delta_{2}}(B(\varrho)))}
\leq C\varrho^{\f{2\delta_{1}}{p(p-\delta_{1})}+\f{3\delta_{2}}{q(q-\delta_{2})}}\|u\|_{L^{p,\infty}(-\varrho^{2},0;L^{q,\infty}(B(\varrho)))}.
$$
This together with \eqref{tsai1} implies the  desired  result.

The  more interesting   results in Theorem \ref{the1.1} are \eqref{u1}, \eqref{tiduu1}, \eqref{xduu1}, \eqref{dz1}.
By means of  Poincar\'e-Sobolev inequality \eqref{pil} in Lorentz spaces, we can prove \eqref{them1.1} \eqref{them1.2} in Theorem \ref{the1.1}.
To the knowledge of the authors, this type  of inequality and its proof were first   mentioned  by Mal\'y in
 \cite[Remark 8.3, p.15]{[Maly]}. The proof relies   on   the concept of a median of function there. As said by Mal\'y in \cite{[Maly]},  one can follow the path of the proof of classical Poincar\'e-Sobolev inequality in Lebesgue spaces presented in  \cite{[Maly]} to prove this type of inequality. Here, we will present a new proof to this type of inequality in Lemma \ref{psillemma} via the Young inequality in Lorentz  spaces rather than utilization of a median of function.  It seems that this proof is more elementary, self-contained and  short  than Mal\'y's proof.
 Moreover, general  Poincar\'e inequality  \eqref{gpil}   in Lorentz spaces are also obtained, which is independent of interesting. Boundedness of Riesz Transform in Lorentz spaces, \eqref{tiduu1} and  Biot-Savart law for the  incompressible flows imply \eqref{xduu1}. Parallelly, the generalized Biot-Savart law below
\be\label{gbsl}
\partial_{k}\partial_{k}u_{l}=\partial_{k}(\partial_{l}u_{k}+\partial_{k}u_{l}).
\ee
yields \eqref{dz1}.

For the  $\varepsilon$-regularity criteria at one scale, Barker \cite{[Barker]} obtained the following results by replacing the Lebesgue spaces  with Lorentz spaces in spatial direction and improved \eqref{ckn1} to
\be\label{Barker}
\|u\|_{L^{3}(-1,0;L^{3,\infty}(B(1)))}
+\|u\|_{L^{\f{3}{2}}(-1,0;L^{\f{3}{2},\infty}(B(1)))}\leq\varepsilon.
\ee
On the other hand,
authors in \cite{[HWZ]} recently generalized \eqref{ckn1} to
 \be\label{optical}
 \|u\|_{L^{p}(-1,0;L^{q}(B(1)))}+\|\Pi\|_{L^{1}(-1,0;L^{1}(B(1)))}\leq\varepsilon,~~1\leq 2/p+3/q <2, 1\leq p,\,q\leq\infty.
 \ee
As aforementioned, \eqref{optical} and  inclusion \eqref{Inclusion} yield the following results.
\begin{theorem}\label{the1.2}
		Let  the pair $(u,  \Pi)$ be a suitable weak solution to the 3D Navier-Stokes system \eqref{NS} in $Q(1)$.
		There exists an absolute positive constant $\varepsilon_{2}$
		such that if the pair $(u,\Pi)$ satisfies, for any 	$l_{1},~l_{2}>1 $ \be\label{0il}\|u\|_{L^{p,\infty},L^{q,\infty}(B(1))}+\|\Pi\|_{L^{l_{1},\infty}L^{l_{2},\infty}(B(1))}\leq\varepsilon_{2},~~1\leq 2/p+3/q <2, 1< p<\infty,\ee
		then, $u\in L^{\infty}(Q(1/2)).$
	\end{theorem}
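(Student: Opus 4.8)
The plan is to derive Theorem \ref{the1.2} from the Lebesgue--space $\varepsilon$--regularity criterion \eqref{optical} (of \cite{[HWZ]}) by means of the embedding \eqref{Inclusion} of Lorentz spaces into Lebesgue spaces on a set of finite measure. The key point is that the parabolic cylinder $Q(1)=B(1)\times(-1,0)$ is fixed once and for all, so this embedding only costs an absolute multiplicative constant, and the strict inequality $2/p+3/q<2$ leaves enough room to lower the integrability exponents slightly while remaining inside the admissible range of \eqref{optical}.

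First I would fix the target exponents. Since $1\le 2/p+3/q<2$ with the upper bound strict, by continuity one can pick $\delta_1,\delta_2>0$ so small that $\tilde p:=p-\delta_1$ and $\tilde q:=q-\delta_2$ still obey $1\le 2/\tilde p+3/\tilde q<2$; indeed $2/\tilde p+3/\tilde q>2/p+3/q\ge 1$ automatically, so only the bound $<2$ has to be arranged, which persists for $\delta_1,\delta_2$ small. Note $1<p<\infty$ forces $\tilde p\in(1,p)$, and the constraint $2/p+3/q<2$ forces $q>3/2$, hence $\tilde q\in(1,q)$, so that $(\tilde p,\tilde q)$ is an admissible pair for \eqref{optical}. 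For the pressure I would fix one concrete pair, for instance $l_1=l_2=3/2>1$, for which hypothesis \eqref{0il} is assumed to hold.

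Next I would apply \eqref{Inclusion} in the spatial variable and then in the time variable --- exactly as in the estimate displayed above in the discussion of \eqref{u2}, now with $\varrho=1$ so the power of $\varrho$ is $1$ --- to obtain absolute constants $C_0,C_1>0$ with
\[
\|u\|_{L^{\tilde p}(-1,0;L^{\tilde q}(B(1)))}\le C_0\|u\|_{L^{p,\infty}(-1,0;L^{q,\infty}(B(1)))},\qquad \|\Pi\|_{L^{1}(-1,0;L^{1}(B(1)))}\le C_1\|\Pi\|_{L^{3/2,\infty}(-1,0;L^{3/2,\infty}(B(1)))}.
\]
Combining these with \eqref{0il} gives
\[
\|u\|_{L^{\tilde p}(-1,0;L^{\tilde q}(B(1)))}+\|\Pi\|_{L^{1}(-1,0;L^{1}(B(1)))}\le (C_0+C_1)\,\varepsilon_2 .
\]
If $\varepsilon$ denotes the absolute constant in \eqref{optical} for the exponents $\tilde p,\tilde q$ and one sets $\varepsilon_2:=\varepsilon/(C_0+C_1)$, then \eqref{optical} applies. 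Applied at every point of $Q(1/2)$ --- using the sub-cylinders $Q_{1/2}(z)\subset Q(1)$, whose rescaling to unit size only changes the constants by a bounded factor since $2/p+3/q<2$ --- it yields $u\in L^\infty(Q(1/2))$, which is the assertion.

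The whole argument is soft and I do not expect a real obstacle; the only two points requiring attention are (i) checking that the perturbed pair $(\tilde p,\tilde q)$ stays in the window $1\le 2/\tilde p+3/\tilde q<2$ of \eqref{optical}, which is exactly where the strictness of the upper bound in \eqref{0il} is used (and why the borderline case $2/p+3/q=2$ is excluded), and (ii) making sure the Lorentz-to-Lebesgue embedding constants are genuinely absolute --- which is the case precisely because $Q(1)$ has fixed finite measure; on a general $Q(\varrho)$ one would instead acquire the harmless factor $\varrho^{\frac{2\delta_1}{p(p-\delta_1)}+\frac{3\delta_2}{q(q-\delta_2)}}$ that appears in the discussion of \eqref{u2}.
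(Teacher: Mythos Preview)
Your proposal is correct and follows precisely the route the paper indicates: immediately before stating Theorem~\ref{the1.2} the paper simply says that \eqref{optical} together with the inclusion \eqref{Inclusion} yield the result, and you have carried out exactly that reduction, perturbing $(p,q)$ slightly using the strict inequality $2/p+3/q<2$ and then applying \eqref{Inclusion} in both space and time on the fixed cylinder $Q(1)$.
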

\begin{remark}
Theorem \ref{the1.2} is a generalization of  \eqref{ckn1}, \eqref{Barker}, and \eqref{optical}.
\end{remark}
We give several applications of the  $\varepsilon$-regularity criteria in Lorentz spaces obtained above. Firstly, as a consequence of Theorem \ref{the1.1}, we have the following results.

\begin{coro}\label{coro}
 Suppose that $(u,\,\Pi)$ is a suitable weak solution to \eqref{NS}. Then there exists a positive constant $\varepsilon_{3}$ such that $(0,0)$ is a regular point provided that one  of the following four conditions holds
\begin{enumerate}[(1)]
 \item \label{coro1}
    $  u \in L^{p,\infty}(-1,0; L ^{q,\infty}(B(1)))$ and $$\|u\|_{L^{p,\infty}(-1,0; L ^{q,\infty}(B(1)))} \leq\varepsilon_{3}, ~~~~ \text{with} ~~~ 2/p+3/q=1, ~~2\leq p\leq\infty;  $$
 \item \label{coro2} $\nabla u \in L^{p,\infty}(-1,0; L ^{q,\infty}(B(1)))$ and  $$\|\nabla u\|_{L^{p,\infty}(-1,0; L ^{q,\infty}(B(1)))} \leq\varepsilon_{3}, ~~~~ \text{with} ~~~ 2/p+3/q=2, ~~ 1< p<\infty;  $$
 \item $\text{curl\,} u \in L^{p,\infty}(-1,0; L ^{q,\infty}(B(1)))$ and  $$\|\text{curl\,} u \|_{L^{p,\infty}(-1,0; L ^{q,\infty}(B(1)))} \leq\varepsilon_{3}, ~~~~ \text{with} ~~~ 2/p+3/q=2, ~~ 1< p<\infty;     $$
 \item $\mathcal{D}(u) \in L^{p,\infty}(-1,0; L ^{q,\infty}(B(1)))$ and  $$\|\mathcal{D}(u)\|_{L^{p,\infty}(-1,0; L ^{q,\infty}(B(1)))} \leq\varepsilon_{3}, ~~~~ \text{with} ~~~ 2/p+3/q=2, ~~ 1< p<\infty.
   $$\end{enumerate}
% then $(0,\,0)$ is regular point.
\end{coro}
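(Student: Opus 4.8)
The plan is to read the Corollary off from the sub-critical alternatives \eqref{u2}, \eqref{tiduu2}, \eqref{xduu2}, \eqref{dz2} of Theorem~\ref{the1.1}, the point being that the scaling relations imposed here ($2/p+3/q=1$ for $u$, and $2/p+3/q=2$ for $\nabla u$, $\operatorname{curl} u$, $\mathcal D(u)$) make the weights in those $\limsup$-conditions equal to $\varrho^{0}=1$. For item \eqref{coro1}, $2/p+3/q=1<2$ with $2\le p\le\infty$, so $(p,q)$ is admissible for \eqref{u2}. First I would record two elementary facts about Lorentz spaces: (a) shrinking $Q(1)$ to $Q(\varrho)$ with $\varrho\le1$ only decreases an iterated Lorentz norm, since $\|u(\cdot,t)\|_{L^{q,\infty}(B(\varrho))}$ decreases pointwise in $t$ and the outer $L^{p,\infty}$-norm is then taken over a shorter interval; (b) for $q>1$ the mean-value map $f\mapsto\overline f_{\varrho}$ is bounded on $L^{q,\infty}(B(\varrho))$ with a $\varrho$-independent constant, which follows by estimating $|\overline f_{\varrho}|$ through the $L^{q,\infty}$--$L^{q',1}$ duality together with $\|\chi_{B(\varrho)}\|_{L^{q,\infty}}\simeq|B(\varrho)|^{1/q}$. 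Granting (a) and (b), applying Theorem~\ref{the1.1} on the cylinder $Q(1)$ gives
\begin{equation*}
\limsup_{\varrho\to0}\ \varrho^{1-\frac2p-\frac3q}\|u-\overline u_{\varrho}\|_{L^{p,\infty}(-\varrho^{2},0;L^{q,\infty}(B(\varrho)))}\ \le\ C\,\|u\|_{L^{p,\infty}(-1,0;L^{q,\infty}(B(1)))}\ \le\ C\varepsilon_{3},
\end{equation*}
so the choice $\varepsilon_{3}:=\varepsilon_{1}/C$ makes \eqref{u2} applicable and $(0,0)$ regular.

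For \eqref{coro2} and its $\operatorname{curl} u$ and $\mathcal D(u)$ versions the argument is shorter, as no mean value appears: now $2/p+3/q=2<3$ with $1<p<\infty$, so the triples lie in the ranges \eqref{tiduu22}, \eqref{xduu22}, \eqref{dz22}, the weight $\varrho^{2-2/p-3/q}$ is again $1$, and fact (a) alone gives
\begin{equation*}
\limsup_{\varrho\to0}\ \varrho^{2-\frac2p-\frac3q}\|\nabla u\|_{L^{p,\infty}(-\varrho^{2},0;L^{q,\infty}(B(\varrho)))}\ \le\ \|\nabla u\|_{L^{p,\infty}(-1,0;L^{q,\infty}(B(1)))}\ \le\ \varepsilon_{3},
\end{equation*}
and the same with $\operatorname{curl} u$ or $\mathcal D(u)$ in place of $\nabla u$; taking $\varepsilon_{3}\le\varepsilon_{1}$ and invoking \eqref{tiduu2}, \eqref{xduu2}, \eqref{dz2} finishes these cases. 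An essentially equivalent route would be to apply Hölder in time on $(-\varrho^{2},0)$ together with the inclusion \eqref{Inclusion} in space to pass from $L^{p,\infty}_{t}L^{q,\infty}_{x}$ down to some $L^{p^{*}}_{t}L^{q^{*},\infty}_{x}$ with $2/p^{*}+3/q^{*}$ equal to $2$, respectively $3$; this gains precisely a factor $\varrho$, which absorbs the $\varrho^{-1}$ weight and lets one use the first alternatives \eqref{u1}, \eqref{tiduu1}, \eqref{xduu1}, \eqref{dz1} instead.

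I do not expect a genuine obstacle here: the Corollary is in essence the observation that the one-scale critical smallness hypothesis already sits below the sub-critical $\limsup$ thresholds of Theorem~\ref{the1.1}. The only points needing care are (i) checking that the exponent pairs named in the Corollary satisfy the \emph{strict} inequalities in \eqref{u22}, \eqref{tiduu22}, \eqref{xduu22}, \eqref{dz22} — which they do with room to spare, including the endpoints $p=2,\,q=\infty$ and $p=\infty,\,q=3$ in \eqref{coro1}, where $L^{\infty,\infty}=L^{\infty}$ and fact (b) is trivial — and (ii) facts (a) and (b) themselves, which are routine consequences of the definition of the Lorentz norms and the inclusion \eqref{Inclusion}.
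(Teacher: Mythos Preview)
Your proposal is correct and matches the paper's intent: the Corollary is stated there without proof, merely as ``a consequence of Theorem~\ref{the1.1},'' and your derivation via the sub-critical alternatives \eqref{u2}, \eqref{tiduu2}, \eqref{xduu2}, \eqref{dz2}---observing that the imposed scaling relations make the $\varrho$-weights identically $1$---is exactly the natural way to fill in that sentence. The two auxiliary facts (monotonicity of iterated Lorentz norms under shrinking domains, and the $\varrho$-uniform boundedness of the averaging map on $L^{q,\infty}(B(\varrho))$ for $q>1$) are routine and correctly identified.
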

\begin{remark}
To the knowledge of the authors, regularity criteria in terms of $\nabla u$ in Lorentz spaces to the Leary-Hopf weak solutions is due to He and Wang in \cite{[HW]}, where they showed that $\|\nabla u\|_{L^{p} (0,T;L^{q,\infty}( \mathbb{R}^{3}))}<\infty$ with $2/p+3/q=2$ ensures that $u$ is regular on $(0,T)$. Compared with results in \cite{[HW]},   result \eqref{coro2} in this corollary allows both the space and time directions be in   Lorentz spaces.
Authors in \cite{[JWW]}    showed the
whole space version of \eqref{coro2} to the Leray-Hopf weak solutions.
\end{remark}
\begin{remark}\label{remark1.4}
We point out that \eqref{coro1} in this corollary still holds for the Leary-Hopf weak solutions. Indeed,  a   Leary-Hopf weak  solution  $  u$ belonging to $   L^{p,\infty}(-1,0; L ^{q,\infty}(B(1)))$ with $2/p+3/q=1$ guarantees  that $u$ is a   suitable weak solution. This fact has been observed in \cite{[GKT],[WZ]}.
\end{remark}

Secondly, we turn our attention to the results involving
Leray's blow up rate in time.
In \cite{[Leray1]}, Leray proved that, for $q>3$ and sufficiently small $\varepsilon$, if smooth solution $u$ satisfies
\be\label{learay}
 \|u(\hs{\cdot},t)\|_{L^{q }(\mathbb{R}^{3})}\leq \varepsilon(-t)^{\f{3-q }{2q}}.
\ee
then $u$ is regular at $t=0$.

Very recently, Kukavica,   Rusin and    Ziane \cite{[KRZ]} improved \eqref{learay} to
 $$
 \|(u_{1},u_{2})\|_{L^{q }(\mathbb{R}^{3})}\leq M(\varepsilon)(-t)^{\f{3-q }{2q}} ~~~\text{and}~~~ \| u_{3}  \|_{L^{q }(\mathbb{R}^{3})}\leq \varepsilon(-t)^{\f{3-q }{2q}}.
$$
In the  spirit of   \cite{[KRZ]},  a generalization of Leray's blow up result is derived from Theorem \ref{the1.2}, that is
\begin{theorem}\label{the1.4}
For $q\geq3$, there exists an absolute positive constant  $\varepsilon$ such that if \hs{a}
smooth solution $u(x,t)$ satisfies
		 $$
 \|u (\cdot,t) \|_{L^{p,\infty}(\mathbb{R}^{3})}\leq \varepsilon(-t)^{\f{3-q }{2q}}.
$$
then the solution  $u(x,t)$ is regular at $t=0.$
\end{theorem}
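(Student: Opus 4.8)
The plan is to derive Theorem~\ref{the1.4} from the one-scale $\varepsilon$-regularity criterion of Theorem~\ref{the1.2} when $q>3$, and from the one-scale criterion \eqref{optical} of \cite{[HWZ]} at the endpoint $q=3$; the point is that the Leray-type decay hypothesis is precisely the scale-invariant smallness those criteria demand. We may assume the hypothesis $\|u(\cdot,t)\|_{L^{q,\infty}(\mathbb R^{3})}\le\varepsilon\,(-t)^{(3-q)/(2q)}$ holds on $(-1,0)$, and by the translation invariance of \eqref{NS} it suffices to fix an arbitrary $x_{0}\in\mathbb R^{3}$ and verify, on the cylinder $B(x_{0},1)\times(-1,0)$, the smallness of the norms of $(u,\Pi)$ required by the relevant criterion (the smooth solution being in particular a suitable weak solution there). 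That criterion then gives $u\in L^{\infty}\big(B(x_{0},1/2)\times(-1/4,0)\big)$ with an $L^{\infty}$ bound independent of $x_{0}$, so taking the supremum over $x_{0}$ yields $u\in L^{\infty}(\mathbb R^{3}\times(-1/4,0))$, i.e.\ $u$ is regular at $t=0$.

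Put $p=2q/(q-3)$, so that $2/p+3/q=1$ and $(3-q)/(2q)=-1/p$. For the velocity, the hypothesis gives, for every $s\in(-1,0)$,
\[
\|u(\cdot,s)\|_{L^{q,\infty}(B(x_{0},1))}\le\|u(\cdot,s)\|_{L^{q,\infty}(\mathbb R^{3})}\le\varepsilon\,(-s)^{-1/p}.
\]
Since $s\mapsto(-s)^{-1/p}$ has $L^{p,\infty}(-1,0)$-quasinorm equal to $1$, monotonicity of the Lorentz quasinorm under pointwise domination gives $\|u\|_{L^{p,\infty}(-1,0;L^{q,\infty}(B(x_{0},1)))}\le\varepsilon$; a parabolic rescaling $u\mapsto r\,u(x_{0}+r\,\cdot,r^{2}\,\cdot)$ shows this quantity is scale invariant, which is why $(3-q)/(2q)$ is the right rate. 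For the pressure, the smooth solution satisfies $\Pi(\cdot,t)=R_{i}R_{j}(u_{i}u_{j})(\cdot,t)$, so boundedness of the Riesz transforms on $L^{q/2,\infty}(\mathbb R^{3})$ (here $1<q/2<\infty$) together with H\"{o}lder's inequality in Lorentz spaces yields
\[
\|\Pi(\cdot,t)\|_{L^{q/2,\infty}(\mathbb R^{3})}\le C\,\|u(\cdot,t)\|_{L^{q,\infty}(\mathbb R^{3})}^{2}\le C\varepsilon^{2}(-t)^{-2/p},
\]
whence $\|\Pi\|_{L^{p/2,\infty}(-1,0;L^{q/2,\infty}(B(x_{0},1)))}\le C\varepsilon^{2}$. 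For $q>3$ one has $p/2>1$, $q/2>1$ and $1<p<\infty$, so $(l_{1},l_{2})=(p/2,q/2)$ is admissible in Theorem~\ref{the1.2}, and taking $\varepsilon$ small enough that $\varepsilon+C\varepsilon^{2}\le\varepsilon_{2}$ finishes this case.

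It remains to treat $q=3$, for which the above $p$ equals $\infty$ and Theorem~\ref{the1.2} does not apply; here I would instead invoke \eqref{optical}. Now the hypothesis reads $\|u(\cdot,t)\|_{L^{3,\infty}(\mathbb R^{3})}\le\varepsilon$ uniformly in $t\in(-1,0)$. Fixing any $\tilde q\in(3/2,3)$ and using the bounded-domain inclusion \eqref{Inclusion}, i.e.\ $L^{3,\infty}(B(x_{0},1))\hookrightarrow L^{\tilde q}(B(x_{0},1))$, one gets $\|u\|_{L^{\infty}(-1,0;L^{\tilde q}(B(x_{0},1)))}\le C\varepsilon$; likewise $\|\Pi(\cdot,t)\|_{L^{3/2,\infty}(\mathbb R^{3})}\le C\varepsilon^{2}$ together with $L^{3/2,\infty}(B(x_{0},1))\hookrightarrow L^{1}(B(x_{0},1))$ gives $\|\Pi\|_{L^{1}(-1,0;L^{1}(B(x_{0},1)))}\le C\varepsilon^{2}$. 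Since $(p,q)=(\infty,\tilde q)$ satisfies $1\le 2/p+3/q<2$, \eqref{optical} applies and yields $u\in L^{\infty}$ near $t=0$ as before. I expect this endpoint to be the only genuinely delicate point: there the scale-invariant Lorentz-in-time criterion of Theorem~\ref{the1.2} is unavailable and one must fall back on the Lebesgue-spatial criterion \eqref{optical}, trading a little integrability through the Lorentz-Lebesgue inclusions on bounded domains, which is harmless precisely because for $q=3$ the hypothesis is already a uniform-in-time bound; everything else is routine scaling bookkeeping once $2/p+3/q=1$ and the decay exponent have been matched.
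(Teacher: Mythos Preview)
Your proposal is correct and follows essentially the approach the paper indicates: the paper does not give a detailed argument but simply says that ``in the same way as \cite{[KRZ]}'' one derives Theorem~\ref{the1.4} from Theorem~\ref{the1.2}, and this is precisely what you carry out for $q>3$. Your separate treatment of the endpoint $q=3$ via the Lebesgue-space one-scale criterion \eqref{optical} and the Lorentz--Lebesgue inclusion \eqref{Inclusion} is a reasonable way to close the case $p=\infty$, which falls outside the range $1<p<\infty$ in Theorem~\ref{the1.2}; the paper does not spell this out, but it is consistent with its framework (alternatively one could appeal directly to Corollary~\ref{coro}\,(\ref{coro1}), which covers $2\le p\le\infty$ and hence $q=3$ without any case distinction).
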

Thirdly,
the structure of potential singular set $\mathcal{S}$ of solutions in \eqref{NS} attracted
extensive research such as  upper bound of box dimension or upper bound for the number of singular points $\mathcal{S}$( see e.g. \cite{[WZ],[CY19],[WW2],[Kukavica],[Neustupa],[CWY],[HWZ],[Seregin]}).
We set $\mathcal{S}(t)=\{(x,t)\in\mathcal{S}\}$. $N(t)$ represents the number of  $\mathcal{S}(t)$.
  In particular, Wang and Zhang \cite{[WZ]} showed that, for any $t\in(-1,0],$
\be\label{numofwz}
  N(t)\leq C \|u\|_{L^{p,\infty}(-1,0;L^{q}(\mathbb{R}^{3}))}~~\text{with}
  ~~2/p+3/q=1, 3<q<\infty.\ee
  Choe,  Wolf  and    Yang \cite{[CWY]} proved that
 $$
  N(t)\leq C \|u\|_{L^{ \infty}(-1,0;L^{3,\infty}(\mathbb{R}^{3}))}.$$
Here, the finial  result is to improve \eqref{numofwz}.
\begin{theorem}\label{the1.5}
		Let $u$ be a Leray-Hopf weak  solution  and satisfy $u\in L^{p,\infty}(-1,0;L^{q,l}(\mathbb{R}^{3}))$ where $q\leq l<\infty$. Then, there
holds
\be\label{1.31}
N(t)\leq C \|u\|_{L^{p,\infty}(-1,0;L^{q,l}(\mathbb{R}^{3}))},\ee
where
$2/p+3/q=1$ and $3<q<\infty$.
	\end{theorem}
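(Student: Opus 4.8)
The plan is to combine the $\varepsilon$-regularity criterion of Corollary \ref{coro}(1) with a covering argument in time. Fix $t_0\in(-1,0]$. By Remark \ref{remark1.4}, a Leray-Hopf weak solution lying in $L^{p,\infty}(-1,0;L^{q,l}(\mathbb{R}^3))$ with $2/p+3/q=1$ (note $L^{q,l}\hookrightarrow L^{q,\infty}$ since $l<\infty$, so the hypotheses of Corollary \ref{coro}(1) and Remark \ref{remark1.4} are met) is in fact a suitable weak solution, so the localized $\varepsilon$-regularity theory applies at every point. The key quantitative statement is the scale-invariant criterion: there is an absolute $\varepsilon_3>0$ such that if
\[
\|u\|_{L^{p,\infty}(t_0-r^2,t_0;\,L^{q,\infty}(B(x_0,r)))}\le \varepsilon_3
\]
for some $r>0$, then $(x_0,t_0)$ is a regular point. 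Hence, for each singular point $(x_0,t_0)\in\mathcal{S}(t_0)$ and for \emph{every} $r\in(0,\sqrt{1+t_0})$ one must have the reverse inequality, with $L^{q,\infty}$ replaced by the stronger $L^{q,l}$ norm (using again $\|\cdot\|_{L^{q,\infty}}\le C\|\cdot\|_{L^{q,l}}$ on the relevant balls, after absorbing the constant into $\varepsilon_3$):
\[
\|u\|_{L^{p,\infty}(t_0-r^2,t_0;\,L^{q,l}(B(x_0,r)))}> \varepsilon_3 .
\]

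Next I would exploit disjointness of small balls. Let $x_1,\dots,x_N$ be distinct points of $\mathcal{S}(t_0)$. Choose $r>0$ small enough that the balls $B(x_j,r)$, $j=1,\dots,N$, are pairwise disjoint and contained in a fixed bounded region, and $r<\sqrt{1+t_0}$. The main technical point is superadditivity of the $t$-outer Lorentz norm raised to an appropriate power over the disjoint spatial balls: because the $x$-integration over $B(x_j,r)$ is a sum over disjoint sets, and the $L^{q,l}$ functional on $\mathbb{R}^3$ dominates (up to a constant depending only on $q,l$) the $\ell^{?}$-type combination of the $L^{q,l}(B(x_j,r))$ quantities, one gets, for a suitable exponent $s=s(p,q,l)\ge 1$,
\[
\sum_{j=1}^{N}\|u\|_{L^{p,\infty}(t_0-r^2,t_0;\,L^{q,l}(B(x_j,r)))}^{s}\le C\,\|u\|_{L^{p,\infty}(-1,0;\,L^{q,l}(\mathbb{R}^3))}^{s}.
\]
Here is where the hypothesis $q\le l<\infty$ is used crucially: finiteness of the second Lorentz index (rather than the weak space $L^{q,\infty}$) is precisely what makes the $L^{q,l}$ norm behave like an $\ell^{l}$-sum over disjoint pieces, giving the superadditivity; with $l=\infty$ one would only get a supremum and the argument would collapse. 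Combining this with the lower bound $\|u\|_{\cdots}^{s}>\varepsilon_3^{s}$ valid at each singular point yields $N\varepsilon_3^{s}\le C\|u\|_{L^{p,\infty}(-1,0;L^{q,l}(\mathbb{R}^3))}^{s}$, i.e. $N(t_0)\le C\|u\|_{L^{p,\infty}(-1,0;L^{q,l}(\mathbb{R}^3))}$ after taking $s$-th roots and renaming the constant (the power on the right reduces to $1$ since the estimate must be homogeneous of degree one in $u$, by scaling — any mismatch is corrected by choosing $r$ in terms of the norm).

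The main obstacle I expect is the superadditivity step: making precise the inequality $\sum_j \|f\|_{L^{q,l}(E_j)}^{s}\le C\|f\|_{L^{q,l}(\cup_j E_j)}^{s}$ for disjoint $E_j$, and then propagating it through the outer $L^{p,\infty}$-in-time norm, which is itself only a quasi-norm. One clean way is to pass to the equivalent (for $q>1$) functional norm on $L^{q,l}$ via the $K$-functional / averaged rearrangement, for which the required disjoint-additivity with $s=l$ is standard; the time direction $L^{p,\infty}$ can be handled by observing that the pointwise-in-$t$ spatial quantities $g_j(t):=\|u(\cdot,t)\|_{L^{q,l}(B(x_j,r))}$ satisfy $\sum_j g_j(t)^{l}\le C\,\|u(\cdot,t)\|_{L^{q,l}(\mathbb{R}^3)}^{l}$ a.e., and then using that $h\mapsto \|h\|_{L^{p,\infty}}$ controls $\|h^{1/l}\ \text{combinations}\|$ appropriately — concretely, bounding each $\|g_j\|_{L^{p,\infty}_t}$ by $\|(\sum_k g_k^l)^{1/l}\|_{L^{p,\infty}_t}$ is false in general, so instead I would first integrate in $t$ at the level of a genuine norm (e.g. use $L^{p,\infty}\hookrightarrow L^{p-\delta}$ on the finite interval, as already done in the discussion after Theorem \ref{the1.1}) to reduce to an honest $L^{p-\delta}_tL^{q,l}_x$ bound, perform the disjoint sum there, and finally re-upgrade. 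The second, more bookkeeping-heavy obstacle is tracking the exact power of the global norm on the right-hand side and confirming it is $1$; this follows from the natural parabolic scaling of \eqref{NS} together with the scaling-critical condition $2/p+3/q=1$, but it requires stating the local criterion in its genuinely scale-free form and choosing the radius $r$ adapted to the solution.
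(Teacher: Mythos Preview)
Your plan to invoke the pressure-free Corollary~\ref{coro}(1) and then sum over disjoint balls has a genuine gap at the time-Lorentz step, and the proposed fix does not close it. From Corollary~\ref{coro}(1) you correctly get, at every singular point $x_j$ and every small $r$, the scale-invariant lower bound $\|g_j\|_{L^{p,\infty}(I_r)}>c\varepsilon_3$ with $g_j(t)=\|u(\cdot,t)\|_{L^{q,l}(B(x_j,r))}$ and $I_r=(t_0-r^2,t_0)$. The spatial disjointness does give $\sum_j g_j(t)^l\le G(t)^l$, $G(t)=\|u(\cdot,t)\|_{L^{q,l}(\mathbb{R}^3)}$, but the inference $\sum_j\|g_j\|_{L^{p,\infty}(I_r)}^l\le C\|G\|_{L^{p,\infty}(I_r)}^l$ is false in general: take $G=\mathbf{1}_{[0,1]}$ and $g_j=\mathbf{1}_{[(j-1)/N,\,j/N]}$, so that $\sum_j g_j^l=G^l$ while $\sum_j\|g_j\|_{L^{p,\infty}}^l=N^{1-l/p}\to\infty$ whenever $l<p$; and $l<p$ does occur in the admissible range (e.g.\ $l=q$ with $3<q<5$, since $p=2q/(q-3)$). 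Your workaround ``drop to $L^{p-\delta}$ and re-upgrade'' does not help either: the available embedding on the bounded interval reads $\|g_j\|_{L^{p-\delta}(I_r)}\le Cr^{2(\frac{1}{p-\delta}-\frac{1}{p})}\|g_j\|_{L^{p,\infty}(I_r)}$, which goes the wrong way for producing a \emph{lower} bound on $\|g_j\|_{L^{p-\delta}(I_r)}$ from the $L^{p,\infty}$ lower bound you have. What you actually need is a one-scale $\varepsilon$-criterion with a strong Lebesgue time exponent $p^{\natural}<p$, i.e.\ at \emph{subcritical} scaling $2/p^{\natural}+3/q\in(1,2)$, and the pressure-free Corollary~\ref{coro}(1) is only stated at the critical scaling.

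This is precisely why the paper's proof follows a different route. It uses the one-scale criterion \emph{with pressure}, Theorem~\ref{the1.2}, which \emph{is} available for subcritical pairs $(p^{\natural},q)$, so that the time norm is a genuine $L^{p^{\natural}}$ and the disjoint-ball summation (together with the H\"older/embedding $L^{p,\infty}(I_r)\hookrightarrow L^{p^{\natural}}(I_r)$ producing exactly the compensating power of $r$) goes through. The price is the local pressure quantity $D_{p^{\natural}/2;\,q/2,\,l/2;\,i}$, and controlling it is the whole point of Lemma~\ref{presure2}: iterating that decay estimate $k$ times expresses the pressure at scale $\theta^k r^*$ as a geometric sum of velocity quantities plus $\beta^k$ times the initial pressure. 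Summing over the $N$ singular points, bounding $\sum_i E_{p^{\natural};q,l;i}(\theta^j r^*)$ by the global norm via \eqref{ei} (this is exactly where $q\le l$ is used), and letting $k\to\infty$ kills the initial-pressure contribution and yields $N\varepsilon\le C\|u\|_{L^{p,\infty}L^{q,l}}^{p^{\natural}}$. Your outline misses both the need for the pressure term at subcritical scaling and the iteration lemma that tames it.
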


This  paper is organized as follows. In   section 2,
we  collect some  materials of  Lorentz spaces. Various
Poincar\'e-Sobolev inequality   are dealt with in these space.    Then, we establish some dimensionless decay estimates.
In section 3, we  complete the  proof of Theorem 1.1.
 Section 4
 is devoted to
proving Theorem 1.3 and 1.4.

\section{Notations and  a key auxiliary lemma} \label{section2}
First, we introduce some notations used in this paper.
Throughout this paper, we denote
\begin{align*}
     &B(x,\mu):=\{y\in \mathbb{R}^{3}||x-y|\leq \mu\}, && B(\mu):= B(0,\mu),\\
     &Q(z,r)=:B(x,r)\times (t-r^2,t), && Q(\mu):= Q(0,\mu). \end{align*}
 For $p\in [1,\,\infty]$, the notation $L^{p}(0,\,T;X)$ stands for the set of measurable functions $f(x,t)$ on the interval $(0,\,T)$ with values in $X$ and $\|f(\cdot,t)\|_{X}$ belonging to $L^{p}(0,\,T)$.
  For simplicity,     we write $$\|\cdot\| _{L^{p}L^{q}(Q(r))}:=\|\cdot\| _{L^{p}(-r^{2},0;L^{q}(B(r)))}  ~~ \text{and}~~~
 \|\cdot\| _{L^{p}(Q(r))}:=\|\cdot\| _{L^{p}L^{p}(Q(r))}.$$
 The classical Sobolev space $W^{k,2}(\Omega)$ is equipped with the norm $\|f\|_{W^{k,2}(\Omega)}=\sum\limits_{\alpha =0}^{k}\|D^{\alpha}f\|_{L^{2}(\Omega)}$. Let $W_{0}^{k,2}(\Omega)$ be the completion of $C^{\infty}_{0}(\Omega)$ in the norm of $W^{k,2}(\Omega)$, where the space  $C^{\infty}_{0}(\Omega)$ is the smooth compactly supported functions on $\Omega$.  $|E|$ represents the $n$-dimensional Lebesgue measure of a set $E\subset \mathbb{R}^{n}$.  We will use the summation convention on repeated indices.
 $C$ is an absolute constant which may be different from line to line unless otherwise stated in this paper.

\begin{definition}\label{defi}
		A  pair   $(u, \,\Pi)$  is called a suitable weak solution to the Navier-Stokes equations \eqref{NS} provided the following conditions are satisfied,
		\begin{enumerate}[(1)]
			\item $u \in L^{\infty}(-T,\,0;\,L^{2}(\mathbb{R}^{3}))\cap L^{2}(-T,\,0;\,\dot{H}^{1}(\mathbb{R}^{3})),\,\Pi\in
			L^{3/2}(-T,\,0;L^{3/2}(\mathbb{R}^{3}));$\label{SWS1}
			\item$(u, ~\Pi)$~solves (\ref{NS}) in $\mathbb{R}^{3}\times (-T,\,0) $ in the sense of distributions;\label{SWS2}
			\item$(u, ~\Pi)$ satisfies the following inequality, for a.e. $t\in[-T,0]$,
			\begin{align}
				&\int_{\mathbb{R}^{3}} |u(x,t)|^{2} \phi(x,t) dx
				+2\int^{t}_{-T}\int_{\mathbb{R} ^{3 }}
				|\nabla u|^{2}\phi  dxds\nonumber\\ \leq&  \int^{t}_{-T }\int_{\mathbb{R}^{3}} |u|^{2}
				(\partial_{s}\phi+\Delta \phi)dxds
				+ \int^{t}_{-T }
				\int_{\mathbb{R}^{3}}u\cdot\nabla\phi (|u|^{2} +2\Pi)dxds, \label{loc}
			\end{align}
			where non-negative function $\phi(x,s)\in C_{0}^{\infty}(\mathbb{R}^{3}\times (-T,0) )$.\label{SWS3}
		\end{enumerate}
	\end{definition}
Next, we present some basic facts on Lorentz spaces. Recall that the distribution function of a   measurable function  $f$ on $\Omega$ is defined by
$$
f_{\ast}(\alpha)=|\{x\in \Omega:|f(x)|>\alpha\}|.
$$
The decreasing rearrangement of $f$ is the function $f^{\ast}$ defined   by
$$
f^{\ast}(t)=\inf\{\alpha>0;f_{\ast}(\alpha)\leq t\}.
$$
For $p,q\in[1,\infty]$, we define
$$
\|f\|_{L^{p,q}(\Omega)}=\left\{\ba
&\B(\int_{0}^{\infty}(t^{\f{1}{p}}f^{\ast}(t))^{q}\f{dt}{t}\B)^{\f1q}, ~~~q<\infty, \\
 &\sup_{t>0}t^{\f{1}{p}}f^{\ast}(t) ,~~~q=\infty.
\ea\right.
$$
Furthermore,
$$
L^{p,q}(\Omega)=\big\{f: f~ \text{is measurable function on}~ \Omega ~\text{and} ~\|f\|_{L^{p,q}(\Omega)}<\infty\big\}.
$$
Notice that
 identity definition of  Lorentz norm  can be found in \cite{[Grafakos],[Maly]}. Indeed, for $0<p<\infty$ and $0<q\leq\infty$, there holds
$$
\|f\|_{L^{p,q}(\Omega)}=\left\{\ba
&\B(p\int_{0}^{\infty}\alpha^{q}f_{*}(\alpha)^{\f{q}{p}}\f{d\alpha}{\alpha}\B)^{\f{1}{q}} , ~~~q<\infty, \\
 &\sup_{\alpha>0}\alpha f_{*}(\alpha)^{\f{1}{p}} ,~~~q=\infty.
\ea\right.
$$			
Similarly, one can defined
Lorentz spaces $L^{p,q}(0,T;X)$ in time for $p\leq q\leq\infty$. $f\in L^{p,   q}(0,T;X)$ means that $\|f\|_{L^{p,q}(0,T;X)}<\infty$, where
$$\|f\|_{L^{p,q}(0,T;X)}=\left\{\ba
&\B(p\int_{0}^{\infty}\alpha^q|\{t\in[0,T)
:\|f(t)\|_{X}>\alpha\}|^{\f{q}{p}}\f{d\alpha}{\alpha}\B)^{\f{1}{q}} , ~~~q<\infty, \\
 &\sup_{\alpha>0}\alpha|\{t\in[0,T)
:\|f(t)\|_{X}>\alpha\}|^{\f{1}{p}} ,~~~q=\infty.\ea\right.
$$
We list the properties of Lorentz spaces.
\begin{itemize}

\item Interpolation characteristic of Lorentz spaces \cite{[BL]}
\be\label{Interpolation characteristic}
(L^{p_{0},q_{0}}(\Omega),L^{p_{1},q_{1}}(\Omega))_{\theta,q}=L^{p,q}(\Omega)
~~~~\text{with}~~~ \f{1}{p}=\f{1-\theta}{p_{0}}+\f{\theta}{p_{1}},~0<\theta<1.\ee

\item
Scaling in  Lorentz spaces
$$\|f(\lambda x)\|_{L^{p,q}(\mathbb{R}^{n})}=\lambda^{-\f{n}{p}}\|f(\lambda x)\|_{L^{p,q}(\mathbb{R}^{n})}.$$
\item
Boundedness of Riesz Transform in Lorentz spaces \cite{[CF]}
\be\|R_{j}f\|_{L^{p,q}(\mathbb{R}^{n})}\leq C\| f\|_{L^{p,q}(\mathbb{R}^{n})},~1<p<\infty.\label{brl}\ee

\item
H\"older's inequality in Lorentz spaces  \cite{[Neil]}
 $$\ba
 &\|fg\|_{L^{r,s}(\Omega)}\leq \|f\|_{L^{r_{1},s_{1}}(\Omega)}\|g\|_{L^{r_{2},s_{2}}(\Omega)},
\\
&\f{1}{r}=\f{1}{r_{1}}+\f{1}{r_{2}},~~\f{1}{s}=\f{1}{s_{1}}+\f{1}{s_{2}}.
\ea$$

\item

The Lorentz spaces increase as the exponent $q$ increases \cite{[Grafakos],[Maly]}

For $1\leq p\leq\infty$ and $1\leq q_{1}<q_{2}\leq\infty,$
$$
\|f\|_{L^{p,q_{2}}(\Omega)}\leq \B(\f{q_{1}}{p}\B)^{\f{1}{q_{1}}-\f{1}{q_{2}}}\|f\|_{L^{p,q_{1}}(\Omega)}.
$$
\item
Inclusion on bounded domains in Lorentz spaces \cite{[Grafakos],[Maly]}

For any $1\leq m<M\leq\infty$,
\be\label{Inclusion}
\|f\|_{L^{m,r}(\Omega)}\leq \B(\f{1}{m}\B)^{\f{r-1}{r}}
\B(\f{q}{M}\B)^{\f{1}{q}}\f{|\Omega|
^{\f{1}{m}-\f{1}{M}}}{\f{1}{m}-\f{1}{M}}
\|f\|_{L^{M,q}(\Omega)}.
\ee
\item Sobolev inequality in Lorentz spaces \cite{[Neil],[Tartar]}
\be\label{sl}
\|f\|_{L^{\f{np}{n-p},p}(\mathbb{R}^{3})}\leq \|\nabla f\|_{L^{p}(\mathbb{R}^{3})}~~\text{with}~~1\leq q<n.\ee
\item Young inequality in Lorentz spaces \cite{[Neil]}

Let $1<p,q,r<\infty$, $0<s_{1},s_{2}\leq\infty$
,$\f{1}{p}+\f{1}{q}=\f{1}{r}+1$, and $ \f{1}{s}=\f{1}{s_{2}}+\f{1}{s_{1}}$. Then there holds
\be\label{young}
\|f\ast g\|_{L^{r,s}(\mathbb{R}^{n})}\leq\|f \|_{L^{p,s_{1}}(\mathbb{R}^{n})}\|g \|_{L^{q,s_{2}}(\mathbb{R}^{n})}.
\ee
\end{itemize}

Next, we turn our attention to Poincar\'e-Sobolev inequality in Lorentz spaces.
Mal\'y first mentioned this inequality and its proof in \cite{[Maly]}. We will give a new proof here.
\begin{lemma}[Poincar\'e-Sobolev inequality in Lorentz spaces]\label{psillemma}Suppose that
$1<p<n$ and $1\leq q\leq\infty$. Then
\begin{align}
 &\|f-\overline{f}\|_{ L^{\f{pn}{p-n},q}  (B(\varrho) )} \leq C\|\nabla f\|_{L^{p,q}(B( \varrho))}.\label{psil1}
\end{align}
\end{lemma}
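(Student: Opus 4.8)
The plan is to deduce \eqref{psil1} from a pointwise Sobolev representation combined with the Young inequality \eqref{young} in Lorentz spaces, which is exactly what lets us bypass the notion of a median. Since $B(\varrho)$ is bounded, $\nabla f\in L^{p,q}(B(\varrho))$ forces $\nabla f\in L^{1}(B(\varrho))$, so $f\in W^{1,1}(B(\varrho))$ and the classical representation on convex domains (Gilbarg--Trudinger, Lemma~7.16) gives, for a.e. $x\in B(\varrho)$,
\[
|f(x)-\overline{f}_{\varrho}|\ \le\ \frac{(2\varrho)^{n}}{n\,\omega_{n}\varrho^{n}}\int_{B(\varrho)}\frac{|\nabla f(y)|}{|x-y|^{\,n-1}}\,dy\ =\ \frac{2^{n}}{n\,\omega_{n}}\,\bigl(K\ast g\bigr)(x),
\]
where $\omega_{n}=|B(1)|$, $K(x)=|x|^{1-n}$, and $g:=|\nabla f|\,\chi_{B(\varrho)}$ is extended by zero to $\mathbb{R}^{n}$. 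I would first check this identity for $f\in C^{\infty}(\overline{B(\varrho)})$ and then pass to general $f$ by mollification, using $\nabla f\in L^{1}(B(\varrho))$.

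The crux is that $K\in L^{\frac{n}{n-1},\infty}(\mathbb{R}^{n})$ with a norm that does not depend on $\varrho$: since $K_{\ast}(\alpha)=|\{|x|<\alpha^{1/(1-n)}\}|=\omega_{n}\,\alpha^{-n/(n-1)}$, we get $\|K\|_{L^{\frac{n}{n-1},\infty}(\mathbb{R}^{n})}=\sup_{\alpha>0}\alpha\,K_{\ast}(\alpha)^{(n-1)/n}=\omega_{n}^{(n-1)/n}$. Now apply \eqref{young} to the pair $(K,g)$ with first indices $\tfrac{n}{n-1}$ and $p$, both lying in $(1,\infty)$ because $1<p<n$. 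One checks $\tfrac{n-1}{n}+\tfrac1p=1+\tfrac{n-p}{np}$, so the output first index is the Sobolev exponent $\tfrac{np}{n-p}\in(1,\infty)$; and since the kernel sits in the weak space (second index $\infty$) while $g$ carries second index $q$, the output second index is $q$. Hence
\[
\|K\ast g\|_{L^{\frac{np}{n-p},q}(\mathbb{R}^{n})}\ \le\ \|K\|_{L^{\frac{n}{n-1},\infty}(\mathbb{R}^{n})}\,\|g\|_{L^{p,q}(\mathbb{R}^{n})}\ =\ \omega_{n}^{(n-1)/n}\,\|\nabla f\|_{L^{p,q}(B(\varrho))}.
\]

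Finally, restricting from $\mathbb{R}^{n}$ back to $B(\varrho)$ only decreases the distribution function, hence the Lorentz quasinorm, so combining the two displays gives \eqref{psil1} with $C=\tfrac{2^{n}}{n}\,\omega_{n}^{-1/n}$, independent of $\varrho$; the scale invariance of the representation constant and of $\|K\|_{L^{\frac{n}{n-1},\infty}}$ is precisely what makes the estimate dimensionally consistent. I expect the only mildly delicate points to be the approximation argument justifying the pointwise representation for merely weakly differentiable $f$, and confirming that \eqref{young} still applies when $q=\infty$ — which it does, since its hypotheses allow second indices in $(0,\infty]$ and in that case all three second indices are $\infty$. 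The main structural step, the convolution bound, is immediate from \eqref{young}, which is why this route is shorter and more self-contained than Mal\'y's median-based proof.
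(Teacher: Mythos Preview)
Your proof is correct and follows essentially the same route as the paper's: both obtain the pointwise potential bound $|f(x)-\overline{f}_{\varrho}|\le C\int_{B(\varrho)}|\nabla f(y)|\,|x-y|^{1-n}\,dy$, rewrite it as a convolution on $\mathbb{R}^{n}$, and then apply the Young inequality \eqref{young} in Lorentz spaces with the kernel in $L^{\frac{n}{n-1},\infty}$ and $|\nabla f|\chi_{B(\varrho)}$ in $L^{p,q}$. The only cosmetic difference is that the paper truncates the kernel to $B(2\varrho)$ (harmless, since $|x-y|\le 2\varrho$ for $x,y\in B(\varrho)$) whereas you keep the full Riesz kernel; either way the $L^{\frac{n}{n-1},\infty}$ norm is finite and $\varrho$-independent, so the conclusion and the constant are the same.
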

\begin{proof}
Let  $\eta_{B(\varrho)}(y)$ be the characteristic function of $B(\varrho)$.
For $x\in B(\varrho)$, after a few computations, we discover that
$$\ba
|f(x)-\overline{f}_{B(\varrho)}|&\leq\f{1}{|B(\varrho)|}
\int_{B(\varrho)}|f(x)-f(y)|dy\\
&\leq C\int_{B(\varrho)}\f{|\nabla f(y)|}{|y-x|^{n-1}}dy\\
&=C\int_{\mathbb{R}^{n}}\eta_{B(\varrho)}(y)|\nabla f(y)||y-x|^{1-n} \eta_{B(2\varrho)}(x-y)dy\\
&=C h\ast g(x),
\ea$$
where
$h(x)=\eta_{B(\varrho)}(x)|\nabla f(x)|,g(x)=|x|^{1-n} \eta_{B(2\varrho)}(x)$.

This leads to
\be\label{lem2.1.2.6}
\|f(x)-\overline{f}_{B(\varrho)}\|
_{L^{ {\f{pn}{ n-p},q}}((B(\varrho)))}\leq C \|h\ast g \|
_{L^{ L^{\f{pn}{ n-p},q}}(B(\varrho))}\leq C \|h\ast g \|
_{L^{ L^{\f{pn}{ n-p},q}}(\mathbb{R}^{n})}.
\ee
Utilizing the Young inequality \eqref{young} in Lorentz spaces, we further derive that
 $$\ba
\|f(x)-\overline{f}_{B(\varrho)}\|
_{L^{ {\f{pn}{ n-p},q}}(B(\varrho))}\leq& C \|g  \|
_{L^{\f{n}{n-1},\infty}(\mathbb{R}^{n})}\|h  \|
_{L^{p,q}(\mathbb{R}^{n})}\\
\leq& C \|\nabla f  \|
_{L^{ p,q }(B(\varrho))}.\ea
$$
This completes the proof of this lemma.
\end{proof}
The above proof also implies Poincar\'e  inequality \eqref{gpil} below in Lorentz spaces. The case $n\leq q<\infty$ is an immediate   consequence of Lemma \ref{psillemma}.
\begin{lemma}[Poincar\'e  inequality in Lorentz spaces]\label{pil}Suppose that
$1<p<\infty$ and $1\leq q\leq\infty$. Then, for $B(\varrho)\subset \mathbb{R}^{n}$ with $n>2$,
\begin{align}
 &\|f-\overline{f}\|_{ L^{p,q}  (B(\varrho) )} \leq C\varrho\|\nabla f\|_{L^{p,q}(B( \varrho))}.\label{gpil}
\end{align}
\end{lemma}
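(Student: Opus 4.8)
The plan is to split into two regimes according to the size of $q$ relative to $n$, and to reduce everything to the Poincaré–Sobolev inequality \eqref{psil1} already established in Lemma \ref{psillemma} together with the inclusion \eqref{Inclusion} on bounded domains. First I would handle the range $n\le q<\infty$. Here the point is that the target exponent pair $L^{p,q}$ on a ball is \emph{weaker} than the Sobolev-sharp pair $L^{\frac{pn}{n-p},q}$ coming from \eqref{psil1}, so I can afford to throw away integrability and only pay a power of $\varrho$ from the measure of the ball. Concretely, applying \eqref{Inclusion} with $m=p$, $M=\frac{pn}{n-p}$ (note $m<M$ since $p<n\le q$ forces $p<n$, hence $\frac{pn}{n-p}>p$) gives
\begin{align*}
\|f-\overline f\|_{L^{p,q}(B(\varrho))}
\le C\,|B(\varrho)|^{\frac1p-\frac{n-p}{pn}}\,\|f-\overline f\|_{L^{\frac{pn}{n-p},q}(B(\varrho))}
= C\,\varrho\,\|f-\overline f\|_{L^{\frac{pn}{n-p},q}(B(\varrho))},
\end{align*}
since $n\big(\frac1p-\frac{n-p}{pn}\big)=n\cdot\frac1n=1$. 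Then \eqref{psil1} bounds the right-hand side by $C\varrho\|\nabla f\|_{L^{p,q}(B(\varrho))}$, which is exactly \eqref{gpil}. This is the routine part.

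The genuinely interesting range is $1<p<\infty$ with no restriction $p<n$ and $1\le q<n$ (or more precisely whenever the Sobolev embedding exponent is not directly available), which the excerpt flags with the remark ``the above proof also implies Poincaré inequality \eqref{gpil}''. Here I would retrace the convolution argument inside the proof of Lemma \ref{psillemma}: for $x\in B(\varrho)$ one has the pointwise bound $|f(x)-\overline f_{B(\varrho)}|\le C\,(h\ast g)(x)$ with $h=\eta_{B(\varrho)}|\nabla f|$ and $g(x)=|x|^{1-n}\eta_{B(2\varrho)}(x)$. The difference from Lemma \ref{psillemma} is that instead of putting $g$ into the scaling-critical space $L^{\frac{n}{n-1},\infty}$, I keep track of the size of $g$ in a \emph{subcritical} Lorentz space so that the Young inequality \eqref{young} returns the exponent $p$ (rather than $\frac{pn}{n-p}$) together with a positive power of $\varrho$. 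Explicitly, since $g$ is supported in $B(2\varrho)$ and equals $|x|^{1-n}$ there, a direct computation of its distribution function shows $\|g\|_{L^{\frac{n}{n-1},1}(\mathbb R^n)}\le C\varrho$, or more flexibly $\|g\|_{L^{r,\infty}(\mathbb R^n)}\le C\varrho^{\,n/r-(n-1)}$ for any $1\le r\le \frac{n}{n-1}$. Choosing $r=\frac{n}{n-1}$ exactly recovers the critical case; choosing $r$ slightly larger — so that $\frac1p+\frac1r=1+\frac1p$ fails and instead $\frac1p+\frac1{r}=\frac1p+\frac1{r}$ with output exponent still $p$ — forces $r=1$...

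Let me instead phrase the clean version: apply \eqref{young} with $f\leftrightarrow h\in L^{p,q}$, $g\in L^{1,1}=L^1$, output $L^{p,q}$, using $\frac1p+\frac11=\frac1p+1$ and $\frac1q+\frac11\ge\frac1q$ (absorb via the nesting property that $L^{p,q}$ increases in $q$). Then $\|h\ast g\|_{L^{p,q}(\mathbb R^n)}\le C\|h\|_{L^{p,q}}\|g\|_{L^1}$, and $\|g\|_{L^1(\mathbb R^n)}=\int_{B(2\varrho)}|x|^{1-n}\,dx=C\varrho$, giving
\begin{align*}
\|f-\overline f\|_{L^{p,q}(B(\varrho))}\le C\|h\ast g\|_{L^{p,q}(\mathbb R^n)}\le C\varrho\,\|h\|_{L^{p,q}(\mathbb R^n)}=C\varrho\,\|\nabla f\|_{L^{p,q}(B(\varrho))}.
\end{align*}
This simultaneously covers all $1<p<\infty$, $1\le q\le\infty$ and reproves the $n\le q<\infty$ case as well, so in fact only one argument is needed.

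The main obstacle — and the only place requiring care — is the bookkeeping in the Young inequality \eqref{young}: its stated hypotheses require $1<p,q,r<\infty$ on the \emph{Lebesgue} exponents and a convolution relation $\frac1{p_1}+\frac1{p_2}=\frac1r+1$ together with $\frac1s=\frac1{s_1}+\frac1{s_2}$ on the second indices. Putting $g\in L^1$ sits at the endpoint $p_2=1$ which is not literally allowed, so I would instead use $g\in L^{r_0,\infty}$ for some $r_0$ strictly between $1$ and $\frac{n}{n-1}$ — permissible since $g(x)=|x|^{1-n}$ on the bounded set $B(2\varrho)$ lies in every $L^{r_0,\infty}$ with $r_0<\frac{n}{n-1}$, with norm $\|g\|_{L^{r_0,\infty}(\mathbb R^n)}\le C\varrho^{\,n/r_0-(n-1)}>0$ — and then correct the resulting slightly-too-good Lebesgue exponent back down to $p$ by a final application of the inclusion \eqref{Inclusion} on $B(\varrho)$, which again only costs a further positive power of $\varrho$; collecting the two powers of $\varrho$ yields exactly $\varrho^1$ by the same arithmetic $n(\frac1p-\frac1{p^\ast})=1$ as above. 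Once the exponents are matched, invoking \eqref{psil1} (in the case $n\le q$) or the convolution estimate directly (in general) closes the proof.
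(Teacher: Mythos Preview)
Your proof is correct and, like the paper's, rests on the pointwise convolution bound $|f-\overline f|\le C\,(h\ast g)$ from Lemma~\ref{psillemma}, but the two arguments distribute the factor $\varrho$ differently. The paper keeps $g$ in the scale-critical space $L^{n/(n-1),\infty}$ (whose norm is independent of $\varrho$), introduces an auxiliary exponent $p^{\natural}$ via $\tfrac{1}{p^{\natural}}=\tfrac{1}{p}+\tfrac{1}{n}$, and applies the inclusion \eqref{Inclusion} to $h$ \emph{before} convolving, paying $\|h\|_{L^{p^{\natural},q}(B(\varrho))}\le C\varrho\,\|h\|_{L^{p,q}(B(\varrho))}$; a single application of \eqref{young} with exponents $(p^{\natural},q)\times(\tfrac{n}{n-1},\infty)\to(p,q)$ then finishes. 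You instead extract the $\varrho$ from $g$---either directly via $\|g\|_{L^{1}}=C\varrho$, or via the subcritical $L^{r_0,\infty}$ norm followed by a second inclusion on the output. The paper's route is one step shorter (one inclusion, one Young), while your $L^{1}$ observation is arguably the most transparent explanation of where the factor $\varrho$ comes from; your $r_0$-workaround has the small advantage that it does not need $p^{\natural}>1$, so it handles $1<p\le \tfrac{n}{n-1}$ without further adjustment. One cosmetic point: in your ``routine'' first case the clause ``$p<n\le q$ forces $p<n$'' is circular---you are simply \emph{assuming} $p<n$ there so that Lemma~\ref{psillemma} applies, which is fine since your convolution argument afterwards covers all $1<p<\infty$ anyway.
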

\begin{proof}
We still use the notions in Lemma \ref{psillemma}.
We denote $p^{\natural}$ satisfying $$1+\f{1}{p}=\f{n-1}{n}+\f{1}{p^{\natural}}.$$
With the help of \eqref{lem2.1.2.6},
 the Young inequality \eqref{young} in Lorentz norm and \eqref{Inclusion}, we infer that
 $$\ba
\|f(x)-\overline{f}_{B(\varrho)}\|
_{L^{ L^{p,q}}(B(\varrho))}
\leq&
C \|h\ast g \|
_{L^{ L^{p,q}}(\mathbb{R}^{n})}\\
\leq& C \|g  \|
_{L^{\f{n}{n-1},\infty}(\mathbb{R}^{n})}\|h  \|
_{L^{p^{\natural},q}(\mathbb{R}^{n})}\\
\leq&  C\varrho\|g  \|
_{L^{\f{n}{n-1},\infty}(\mathbb{R}^{n})}\|h  \|
_{L^{p,q}(\mathbb{R}^{n})}\\
\leq& C \varrho\|\nabla f  \|
_{L^{p,q}(B(\varrho))}.\ea
$$
We finish the proof of this lemma.
\end{proof}

\begin{remark}\label{rmk2.1}
By
localization of \eqref{sl} via usual cut-off function as \cite{[WWZ]} and  a slight variant of the proof of Lemma \ref{zcl}-\ref{presure} below, following the path of \cite{[WWZ]}, one can achieve the proof of (\ref{them1.1}) in Theorem \ref{the1.1} without the utilization of Poincar\'e-Sobolev inequality \ref{psil1}  in Lorentz spaces.
\end{remark}
\begin{lemma}\label{zcl}  Let $q \in[2,6]$ and $\f{2}{p}+\f{3}{q}=\f{3}{2}$.  Then, for $\varrho>0$, there exists a constant $C$ such that
\begin{align}
&\|f-\overline{f}_{\varrho}\|_{L^{p}L^{q,2}   (Q(\varrho) )} \leq C\|\nabla f\|_{L^{2}(Q( \varrho ))}^{\f32-\f{3}{q}}\|f\|
^{\f{3}{q}-\f{1}{2}}_{L^{\infty}L^{2}(Q( \varrho))}.
\label{locsl}
\end{align}
\end{lemma}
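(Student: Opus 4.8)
The plan is to reduce the mixed-norm estimate to the interplay of the classical Gagliardo--Nirenberg interpolation at a fixed time slice with an integration in time, all performed in Lorentz scales. First I would fix $t\in(-\varrho^2,0)$ and work with $g(\cdot,t):=f(\cdot,t)-\overline{f}_\varrho$. On the ball $B(\varrho)$ I would interpolate the Lorentz norm $L^{q,2}(B(\varrho))$ between $L^{2}(B(\varrho))$ and the Sobolev endpoint $L^{6,2}(B(\varrho))$; since $q\in[2,6]$ we may write $\tfrac1q=\tfrac{1-\vartheta}{2}+\tfrac{\vartheta}{6}$ with $\vartheta=\tfrac32-\tfrac3q\in[0,1]$, and the interpolation characterization \eqref{Interpolation characteristic} of Lorentz spaces (or Hölder's inequality in Lorentz spaces) gives
\[
\|g(\cdot,t)\|_{L^{q,2}(B(\varrho))}\le C\,\|g(\cdot,t)\|_{L^{6,2}(B(\varrho))}^{\vartheta}\,\|g(\cdot,t)\|_{L^{2}(B(\varrho))}^{1-\vartheta}.
\]
For the first factor I would localize the Lorentz--Sobolev inequality \eqref{sl} (with $p=2$, $n=3$, so $\tfrac{np}{n-p}=6$) by the usual cut-off argument, exactly as indicated in Remark \ref{rmk2.1}: extending $g$ by zero after multiplying by a cut-off supported in $B(\varrho)$ and bounding the extra gradient term by the Poincaré inequality in Lorentz spaces (Lemma \ref{pil}), one obtains $\|g(\cdot,t)\|_{L^{6,2}(B(\varrho))}\le C\|\nabla f(\cdot,t)\|_{L^{2}(B(\varrho))}$, where the factors of $\varrho$ coming from Poincaré are absorbed because the left-hand side norm in \eqref{locsl} is the one without an explicit $\varrho$ weight (this is the dimensionally balanced choice $\tfrac2p+\tfrac3q=\tfrac32$). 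For the second factor I would simply bound $\|g(\cdot,t)\|_{L^{2}(B(\varrho))}\le \|f(\cdot,t)\|_{L^2(B(\varrho))}+C\le C\|f\|_{L^\infty L^2(Q(\varrho))}$ uniformly in $t$ (subtracting the spatial average only decreases the $L^2$ norm).

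Combining the two slice estimates yields, for a.e. $t$,
\[
\|g(\cdot,t)\|_{L^{q,2}(B(\varrho))}\le C\,\|\nabla f(\cdot,t)\|_{L^{2}(B(\varrho))}^{\vartheta}\,\|f\|_{L^\infty L^2(Q(\varrho))}^{1-\vartheta},
\]
with $\vartheta=\tfrac32-\tfrac3q$ and $1-\vartheta=\tfrac3q-\tfrac12$. Raising to the $p$-th power and integrating in $t\in(-\varrho^2,0)$, the second factor is already an $L^\infty_t$ quantity and factors out, while the time integral of $\|\nabla f(\cdot,t)\|_{L^2(B(\varrho))}^{p\vartheta}$ must be controlled by $\|\nabla f\|_{L^2(Q(\varrho))}^{2}$; this forces $p\vartheta=2$, i.e. $p(\tfrac32-\tfrac3q)=2$, which is precisely the scaling relation $\tfrac2p+\tfrac3q=\tfrac32$ in the hypothesis. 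Hence $\int_{-\varrho^2}^0\|\nabla f(\cdot,t)\|_{L^2(B(\varrho))}^{p\vartheta}\,dt=\|\nabla f\|_{L^2(Q(\varrho))}^{2}$, and taking $p$-th roots gives exactly \eqref{locsl} with exponents $\tfrac32-\tfrac3q$ on $\|\nabla f\|_{L^2(Q(\varrho))}$ and $\tfrac3q-\tfrac12$ on $\|f\|_{L^\infty L^2(Q(\varrho))}$.

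The only genuinely delicate point is the localization of the Lorentz--Sobolev inequality on the ball: one must check that multiplying by a cut-off $\varphi\in C_0^\infty(B(\varrho))$ with $|\nabla\varphi|\lesssim \varrho^{-1}$ keeps the right Lorentz norm, i.e. that $\|\nabla(\varphi g)\|_{L^2(\mathbb{R}^3)}\le \|\nabla g\|_{L^2(B(\varrho))}+\varrho^{-1}\|g\|_{L^2(B(\varrho))}$ and that the Poincaré term $\varrho^{-1}\|g\|_{L^2(B(\varrho))}$ is dominated by $\|\nabla f\|_{L^2(B(\varrho))}$ via Lemma \ref{pil} (applied with $p=q=2$, $n=3$). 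Since $g=f-\overline f_\varrho$ has zero spatial mean, Lemma \ref{pil} gives $\|g(\cdot,t)\|_{L^2(B(\varrho))}\le C\varrho\|\nabla f(\cdot,t)\|_{L^2(B(\varrho))}$, so this term is indeed absorbed and no stray powers of $\varrho$ survive — consistent with the scaling-invariant form of \eqref{locsl}. Everything else is Hölder/interpolation bookkeeping in Lorentz spaces, using the properties collected in Section \ref{section2}.
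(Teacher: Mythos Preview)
Your proposal is correct and follows essentially the same route as the paper: interpolate $\|f-\overline f_\varrho\|_{L^{q,2}(B(\varrho))}$ between $L^2$ and $L^{6,2}$ with weight $\vartheta=\tfrac32-\tfrac3q$, control the $L^{6,2}$ endpoint by $\|\nabla f\|_{L^2(B(\varrho))}$, and use $p\vartheta=2$ for the time integration. The only difference is that the paper obtains the $L^{6,2}$ bound directly from the Poincar\'e--Sobolev inequality in Lorentz spaces (Lemma~\ref{psillemma} with $p=2$, $n=3$), which avoids the cut-off step entirely; your localization argument works too, but note that a cut-off \emph{supported} in $B(\varrho)$ cannot be identically $1$ on $B(\varrho)$, so you should either enlarge the support slightly or, more simply, invoke Lemma~\ref{psillemma} directly.
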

\begin{proof}
For $2<q<6$, by the interpolation characteristic \eqref{Interpolation characteristic} or the H\"older inequality in Lorentz spaces,  we know that there exists a constant $0<\theta<1$ such that
$$
\|f-\overline{f}_{\varrho}\|_{L^{q,2}(B(\varrho))} \leq C \|f-\overline{f}_{\varrho}\|^{\theta}_{L^{2}(B(\varrho))}
\|f-\overline{f}_{\varrho}\|^{1-\theta}_{L^{6,2}(B(\varrho)) }~~\text{with} ~~ \f{1}{q}=\f{\theta}{2}+\f{1-\theta}{6}.
$$
It is clear that the above inequality valid for $q=2$ and $q=6$.
Summarily, we always have, for $2\leq q\leq6$,
$$
\|f-\overline{f}_{\varrho}\|_{L^{q,2}(B(\varrho))} \leq C \|f\|^{\f{6-q}{2q}}_{L^{2}(B(\varrho))}
\|f-\overline{f}_{\varrho}\|^{\f{3q-6}{2q}}_{L^{6,2}(B(\varrho)) }.
$$
Furthermore, we derive from  that
\be\label{GNL}
\|f-\overline{f}_{\varrho}\|_{L^{q,2}(B(\varrho))} \leq C \|f\|^{\f{6-q}{2q}}_{L^{2,2}(B(\varrho))}
\|\nabla f\|^{\f{3q-6}{2q}}_{L^{2}(B(\varrho)) }.
\ee
Combining this and $\f{2}{p}+\f{3}{q}=\f{3}{2}$ yields \eqref{locsl}.
This achieves the proof of the desired estimate.
\end{proof}

 Before proceeding  further, according to the natural scaling in \eqref{NS},  we  write  the following dimensionless quantities,
\begin{align}
&E_{\ast}(\varrho)=\frac{1}{\varrho}\iint_{ Q(\varrho) }|\nabla u|^2dx dt,& E(\varrho)=\sup_{-\varrho^2\leq   t<0}\frac{1}{\varrho}\int_{B(\varrho)}|u|^2dx,\nonumber\\
&E_{3}(\varrho)=\frac{1}{\varrho^{2}}\iint_{ Q(\varrho) }|u|^{3}dx dt,&D_{3/2}(\varrho)=\frac{1}{\varrho^{2}}\iint_{ Q(\varrho)} |\Pi-\bar{\Pi}_{ B(\varrho) }|^{\frac{3}{2}}dx
dt. \nonumber
\end{align}
In the proof of Theorem \ref{the1.1}, we also require
\begin{align}
   E_{p;q,\infty}(\varrho)=
\varrho^{-1}\|u-\overline{u}_{\varrho}\|_{L^{p}_t
L^{q,\infty}_{x}(Q(\varrho))}~~\text{with}~~
 \f{2}{p}+ \frac{3}{q}=2.
\nonumber\end{align}
\begin{lemma}\label{ineq}
For $0<\mu\leq \rho$,~
there is an absolute constant $C$  independent of  $\mu$ and $\rho$,~ such that
 \begin{align}
\wred{E_{3}(\mu)\leq C\B(\f{\rho}{\mu}\B)^{2}E_{p;q,\infty} (\rho) E_{\ast}(\rho)^{1-\f{1}{p}} E^{\f{1}{p}}(\rho) +C\B(\f{\mu}{\rho}\B)E_{3}(\rho).}
\label{ineq2/2}    \end{align}
\end{lemma}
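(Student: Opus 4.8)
The plan is to interpolate $\|u-\overline{u}_\mu\|_{L^3(Q(\mu))}$ between an $L^\infty_t L^2_x$ bound, an $L^2_t L^6_x$-type bound coming from the gradient, and the small quantity $E_{p;q,\infty}$. First I would fix $0<\mu\le\rho$ and replace $\overline{u}_\rho$ by $\overline{u}_\mu$ at the cost of a harmless term: on $Q(\mu)$ we split $u-\overline{u}_\rho=(u-\overline{u}_\mu)+(\overline{u}_\mu-\overline{u}_\rho)$, and the constant $\overline{u}_\mu-\overline{u}_\rho$ contributes $\lesssim(\mu/\rho)E_3(\rho)$ to $E_3(\mu)$ by a standard mean-value/Jensen argument (this is the source of the second term on the right of \eqref{ineq2/2}). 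So it suffices to bound $\varrho^{-2}\iint_{Q(\mu)}|u-\overline{u}_\mu|^3$.

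Next I would work on a fixed time slice. By Hölder's inequality in Lorentz spaces in the $x$-variable, writing $3=\theta\cdot 2+(1-\theta)\cdot q$ at the level of exponents in a suitable way, I would interpolate
$$\|u-\overline{u}_\mu\|_{L^3(B(\mu))}\le C\,\|u-\overline{u}_\mu\|_{L^{q,\infty}(B(\mu))}^{a}\,\|u-\overline{u}_\mu\|_{L^{s,2}(B(\mu))}^{1-a}$$
for the appropriate $s$ and $a\in(0,1)$ dictated by $1/3 = a/q + (1-a)/s$, and then control the second factor by the $L^{s,2}$ estimate of Lemma \ref{zcl} (with $\varrho=\mu$), i.e. by $\|\nabla u\|_{L^2(Q(\mu))}$ and $\|u\|_{L^\infty L^2(Q(\mu))}$ after enlarging $Q(\mu)$ to $Q(\rho)$. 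Raising to the third power, integrating in $t$ over $(-\mu^2,0)$, and applying Hölder in time with the exponent $p$ attached to the $E_{p;q,\infty}$-factor, the time integral of the $L^{q,\infty}_x$-norm produces exactly $\|u-\overline{u}_\mu\|_{L^p_tL^{q,\infty}_x(Q(\mu))}$, which is $\le\mu\,E_{p;q,\infty}(\mu)\le\rho\,E_{p;q,\infty}(\rho)$ after extending to $Q(\rho)$; the remaining powers of $\|\nabla u\|_{L^2}$ and $\sup_t\|u\|_{L^2}$ assemble into $E_\ast(\rho)^{1-1/p}E(\rho)^{1/p}$ once one checks the bookkeeping of exponents using the scaling relations $2/p+3/q=2$ and $2/p+3/q=3/2$ from Lemma \ref{zcl}. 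Tracking the powers of $\mu$ and $\rho$ from the normalizations of $E_3$, $E_{p;q,\infty}$, $E_\ast$, $E$, together with the $\mu\le\rho$ inclusions, yields the prefactor $(\rho/\mu)^2$.

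The main obstacle is the exponent bookkeeping: one must choose the intermediate Lebesgue/Lorentz exponent $s$ (and the interpolation parameter $a$) so that simultaneously (i) Lemma \ref{zcl} applies, i.e. $s\in[2,6]$ and its companion time exponent satisfies the $3/2$-scaling, (ii) the $L^{q,\infty}_x$ factor carries a time exponent compatible with $2/p+3/q=2$ so that Hölder in time closes, and (iii) the resulting powers of $\varrho$ match the claimed $(\rho/\mu)^2$ and $(\mu/\rho)$ after replacing every $Q(\mu)$-norm by the larger $Q(\rho)$-norm. I would verify that the choice forced by $1/3=a/q+(1-a)/s$ together with the two scaling identities is consistent and gives the exponent $1-1/p$ on $E_\ast$ and $1/p$ on $E$; this is routine but is where all the care is needed. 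A secondary, minor point is making sure the enlargement from $B(\mu)$ to $B(\rho)$ (and $Q(\mu)$ to $Q(\rho)$) in the non-small factors only costs powers of $\rho/\mu\ge 1$, which is immediate from monotonicity of the integrals and the explicit $\varrho$-weights in the definitions of the dimensionless quantities.
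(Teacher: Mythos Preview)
Your interpolation strategy (H\"older in Lorentz spaces to isolate one factor in $L^{p}_{t}L^{q,\infty}_{x}$, then Lemma~\ref{zcl} on the remaining two factors) is exactly what the paper does; the paper makes the specific choice $|u-\overline u_\rho|^{3}=|u-\overline u_\rho|\cdot|u-\overline u_\rho|^{2}$, so that the second factor lands in $L^{2p'}_{t}L^{2q',2}_{x}$ with $2/(2p')+3/(2q')=3/2$ (equivalent to $2/p+3/q=2$), and Lemma~\ref{zcl} applies directly on $Q(\rho)$. So the ``exponent bookkeeping'' you flag as the main obstacle is in fact automatic once you take $a=1/3$.

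The real gap is in your reduction step. Recall $E_{3}(\mu)=\mu^{-2}\iint_{Q(\mu)}|u|^{3}$, not $\mu^{-2}\iint_{Q(\mu)}|u-\overline u_\rho|^{3}$; you never explain how to pass from $|u|$ to $|u-\overline u_\rho|$. Moreover, your further split $u-\overline u_\rho=(u-\overline u_\mu)+(\overline u_\mu-\overline u_\rho)$ does not do what you claim: the constant $\overline u_\mu-\overline u_\rho$ does \emph{not} contribute $(\mu/\rho)E_{3}(\rho)$. Indeed $\overline u_\mu-\overline u_\rho=\fint_{B(\mu)}(u-\overline u_\rho)$, so by Jensen $\int_{B(\mu)}|\overline u_\mu-\overline u_\rho|^{3}\le\int_{B(\mu)}|u-\overline u_\rho|^{3}$, which is the same order as the main term, with no gain of $(\mu/\rho)$. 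Relatedly, the ``monotonicity'' you invoke, $\mu\,E_{p;q,\infty}(\mu)\le\rho\,E_{p;q,\infty}(\rho)$, is not immediate because the two sides subtract different means $\overline u_\mu$ and $\overline u_\rho$.

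The fix (and the paper's approach) is to avoid $\overline u_\mu$ altogether: split $u=(u-\overline u_\rho)+\overline u_\rho$ on $Q(\mu)$. Then $\mu^{-2}\iint_{Q(\mu)}|\overline u_\rho|^{3}\le C(\mu/\rho)^{3}\cdot\mu^{-2}\iint_{Q(\rho)}|u|^{3}=C(\mu/\rho)E_{3}(\rho)$ by Jensen, which is the genuine source of the second term in \eqref{ineq2/2}. For the first piece, simply enlarge $Q(\mu)$ to $Q(\rho)$ and run the H\"older/Lemma~\ref{zcl} argument directly on $Q(\rho)$ with the single mean $\overline u_\rho$; then $\|u-\overline u_\rho\|_{L^{p}_{t}L^{q,\infty}_{x}(Q(\rho))}=\rho\,E_{p;q,\infty}(\rho)$ by definition, with no mean-changing needed, and the prefactor $(\rho/\mu)^{2}$ comes from dividing the resulting bound $C\rho^{2}E_{p;q,\infty}(\rho)E_\ast(\rho)^{1-1/p}E(\rho)^{1/p}$ by $\mu^{2}$.
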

\begin{proof}
By using the H\"older  inequality and \eqref{locsl}, we get
\be\ba\label{key2.9} \iint_{Q(\varrho)} |u- \overline{u}_{ \varrho}|^{3}dxds&=\iint_{Q(\varrho)} |u- \overline{u}_{ \varrho}| |u- \overline{u}_{ \varrho}|^{2}dxds
\\& \leq C \|u- \overline{u}_{ \varrho}\|_{ L_{t}^{p}L_x^{q,\infty} (Q(\varrho))} \|u- \overline{u}_{ \varrho}\|^{2}_{ L_{t}^{2p^{\ast}}L_x^{2q^{\ast},2} (Q(\varrho))}
\\& \leq C \|u- \overline{u}_{ \varrho}\|_{ L_{t}^{p}L_x^{q,\infty} (Q(\varrho))} \|\nabla u\|_{L^{2}(Q(\sqrt{2}\varrho ))}^{\f{2}{p^{\ast}}}\|u \|^{2-\f{2}{p^{\ast}}}_{L^{\infty}_{t}L_x^{2}(Q(\varrho))}. \ea\ee
 Thanks to  $\f{2}{p}+\f{3}{q}=2$, we see that
\be\ba
 \iint_{Q( \varrho)}|u- \overline{u}_{ \varrho}|^{3}dxdt
\leq  C\|u- \overline{ u}_{\varrho }\|_{L_{t}^{p}L_x^{q,\infty}(Q(\varrho))}   \|\nabla u\|_{L^{2}( Q(\varrho) )}^{ \f{1}{q} } \|u- \overline{ u}_{\varrho} \|^{2-\f{1}{q}}_{ L^{\infty}L^{2} (Q(\varrho))}.\label{eq3.4}
\ea\ee
  In view of   the triangle inequality,  we observe that
\begin{align}\nonumber
\iint_{Q(\mu)}|u|^{3}dx\leq& C\iint_{Q(\mu)}|u-\bar{u}_{{\rho}}|^{3}dx
+C\iint_{\wred{Q(\mu)}}|\bar{u}_{\rho}|^{3} dx\\
\leq& C\iint_{Q(\rho)}|u-\bar{u}_{\rho}|^{3}dx
 +
 C\f{\mu^{3}}{\rho^{3}}\B( \iint_{\wred{Q(\rho)}}|u|^{3}dx\B). \label{lem2.31}
 \end{align}
Plugging \eqref{eq3.4} into \eqref{lem2.31}, we know that  \eqref{ineq2/2}.
 \end{proof}
 \begin{lemma}\label{presure}
For $0<8\mu\leq \rho$, there exists an absolute constant $C$  independent of $\mu$ and $\rho$ such that
\begin{align}
&D_{3/2}(\mu)\leq
C\left(\f{\rho}{\mu}\right)
^{2} E_{p;q,\infty } (\rho)\wred{E_{\ast}(\rho)^{1-\f{1}{p}} E^{\f{1}{p}}(\rho)}
+C\left(\f{\mu}{\rho}\right)^{\f{5}{2}}D_{3/2}(\rho).\label{pe}
\end{align}
where the pair $(p,q)$ is defined in \eqref{u11}.
\end{lemma}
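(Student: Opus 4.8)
The plan is to estimate the pressure by decomposing it into a near-field part determined by the local velocity and a far-field harmonic correction, following the standard Caffarelli--Kohn--Nirenberg scheme but carried out in Lorentz spaces. Fix a cutoff $\phi\in C_0^\infty(B(\rho/2))$ with $\phi\equiv 1$ on $B(\rho/4)$ and $|\nabla^k\phi|\lesssim \rho^{-k}$. Taking the divergence of the momentum equation gives $-\Delta\Pi=\partial_i\partial_j(u_iu_j)$, so on $B(\rho/4)$ one writes $\Pi=\Pi_1+\Pi_2$ where $\Pi_1 = -(-\Delta)^{-1}\partial_i\partial_j(\phi\,(u_i-\overline u_\rho)(u_j-\overline u_\rho))$ (the constants and linear-in-$u$ terms drop out under the double derivative, using $\Div u=0$ to rewrite $u_iu_j$ modulo divergence-free corrections as $(u-\overline u_\rho)_i(u-\overline u_\rho)_j$) and $\Pi_2$ is harmonic in $B(\rho/4)$.

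For $\Pi_1$, I would use the Calder\'on--Zygmund/Riesz-transform bound in Lorentz spaces \eqref{brl}: the second Riesz transforms are bounded on $L^{s,r}(\mathbb R^3)$ for $1<s<\infty$, hence
\[
\|\Pi_1(\cdot,t)\|_{L^{3/2}(B(\rho/4))}\le \|\Pi_1(\cdot,t)\|_{L^{3/2,3/2}}\lesssim \big\|\phi\,(u-\overline u_\rho)\otimes(u-\overline u_\rho)(\cdot,t)\big\|_{L^{3/2,3/2}(\mathbb R^3)}\lesssim \|(u-\overline u_\rho)(\cdot,t)\|_{L^{3,\infty}}\|(u-\overline u_\rho)(\cdot,t)\|_{L^{3,3}(B(\rho/2))},
\]
the last step by H\"older in Lorentz spaces. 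The factor with index $(3,\infty)$ is controlled after integration in time by $E_{p;q,\infty}(\rho)$ via H\"older in $t$ between $L^p_t$ and its conjugate, noting the scaling $\tfrac2p+\tfrac3q=2$ matches; the remaining factor $\|u-\overline u_\rho\|_{L^2_t L^{3,3}_x}$ is absorbed into $\|\nabla u\|_{L^2(Q(\rho))}$ and $\|u\|_{L^\infty_t L^2_x}$ by \eqref{locsl} (the $q=3$ case of Lemma \ref{zcl}, which is the endpoint $\tfrac2p+\tfrac3q=\tfrac32$ with $p=2$), producing exactly $E_\ast(\rho)^{1-1/p}E^{1/p}(\rho)$ after the time integration. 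Raising to the $3/2$ power, dividing by $\mu^2$, and tracking the scaling yields the first term on the right of \eqref{pe}, with the $(\rho/\mu)^2$ coming from the mismatch between the $\mu^{-2}$ normalization of $D_{3/2}(\mu)$ and the $\rho$-intrinsic bound.

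For $\Pi_2$, harmonicity gives interior sup-norm control by an $L^{3/2}$ average over a slightly larger ball: for $8\mu\le\rho$,
\[
\sup_{B(\mu)}|\Pi_2-\overline{(\Pi_2)}_{B(\mu)}|\lesssim \mu\,\|\nabla\Pi_2\|_{L^\infty(B(\mu))}\lesssim \mu\,\rho^{-4}\|\Pi_2-\overline{(\Pi_2)}_{B(\rho/8)}\|_{L^1(B(\rho/8))},
\]
and then $\Pi_2=\Pi-\Pi_1$ on $B(\rho/8)$, so $\|\Pi_2-\overline{(\Pi_2)}\|_{L^{3/2}(B(\rho/8))}\lesssim \|\Pi-\overline\Pi_{B(\rho)}\|_{L^{3/2}(B(\rho))}+\|\Pi_1-\overline{(\Pi_1)}\|_{L^{3/2}(B(\rho/8))}$; the $\Pi_1$ contribution is already estimated above, and the $\Pi$ contribution, after integrating the $L^{3/2}_t$ norm and rescaling, gives $(\mu/\rho)^{5/2}D_{3/2}(\rho)$ — the exponent $5/2$ arising as $3$ (from $\mu^3$ volume scaling of the $L^{3/2}$-to-$L^\infty$ conversion raised appropriately) minus the $\mu^{-2}$ normalization and the time integration, consistent with the standard CKN bookkeeping. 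Combining the two pieces gives \eqref{pe}.

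The main obstacle is the $\Pi_1$ estimate in Lorentz spaces: one must check that the mixed H\"older split $\|fg\|_{L^{3/2,3/2}}\le \|f\|_{L^{3,\infty}}\|g\|_{L^{3,3}}$ is legitimate (it is, by the H\"older inequality in Lorentz spaces with $\tfrac1{3/2}=\tfrac13+\tfrac13$ and $\tfrac1{3/2}=\tfrac1\infty+\tfrac1{3/2}$, wait — one takes the second Sobolev-conjugate index carefully, using $L^{3,\infty}\cdot L^{3,\,r}\hookrightarrow L^{3/2,r}$) and that the Riesz-transform bound \eqref{brl} genuinely delivers the $L^{3/2,3/2}$ norm of $\Pi_1$ rather than merely an $L^{3/2}$ norm — which it does since $L^{3/2,3/2}=L^{3/2}$. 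The only real subtlety beyond that is keeping the powers of $\rho/\mu$ and $\mu/\rho$ straight through the scalings, and ensuring the time integration uses the correct H\"older pairing dictated by $\tfrac2p+\tfrac3q=2$; I would organize the computation so that the spatial Lorentz estimates are done first at fixed $t$, and the time integration (which is entirely classical) is the last step.
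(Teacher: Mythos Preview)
Your overall strategy matches the paper's: localize the pressure via a cutoff, split into a Calder\'on--Zygmund piece $P_1$ (your $\Pi_1$) and a harmonic remainder, estimate the harmonic piece by interior gradient bounds to extract the $(\mu/\rho)^{5/2}$ gain, and bound the CZ piece by $\|u-\bar u_\rho\|^3_{L^3(Q)}$ followed by a Lorentz H\"older split against the energy. The harmonic part is fine and essentially identical to the paper's.

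The gap is in the $\Pi_1$ estimate. Your fixed spatial split $\|(u-\bar u_\rho)\otimes(u-\bar u_\rho)\|_{L^{3/2}}\lesssim\|u-\bar u_\rho\|_{L^{3,\infty}}\|u-\bar u_\rho\|_{L^{3,3}}$ corresponds only to the single case $q=3$, $p=2$; it does not cover the full range $\tfrac{2}{p}+\tfrac{3}{q}=2$ required by the lemma. In particular, the claim that the $L^{3,\infty}_x$ factor ``is controlled after integration in time by $E_{p;q,\infty}(\rho)$ via H\"older in $t$'' fails for $q\neq 3$ --- no time-only H\"older inequality converts an $L^{3,\infty}_x$ norm into an $L^{q,\infty}_x$ norm. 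Likewise, applying \eqref{locsl} at $q=3$ puts the second factor in $L^4_tL^{3,2}_x$ (not $L^2_tL^{3,3}_x$) and yields a \emph{fixed} power $\|\nabla u\|_{L^2}^{1/2}\|u\|_{L^\infty L^2}^{1/2}$, not the $p$-dependent exponent $E_\ast(\rho)^{1-1/p}E(\rho)^{1/p}$ you claim. The remedy, which is exactly the paper's \eqref{key2.9}, is to split at the $(p,q)$-adapted indices: with $p'=\tfrac{p}{p-1}$, $q'=\tfrac{q}{q-1}$,
\[
\iint_{Q(\rho)}|u-\bar u_\rho|^3\,dx\,dt\le C\,\|u-\bar u_\rho\|_{L^p_tL^{q,\infty}_x(Q(\rho))}\,\|u-\bar u_\rho\|^2_{L^{2p'}_tL^{2q',2}_x(Q(\rho))},
\]
and since $\tfrac{2}{2p'}+\tfrac{3}{2q'}=\tfrac{3}{2}$, Lemma~\ref{zcl} applies to the second factor and produces precisely $\|\nabla u\|_{L^2}^{2-2/p}\|u\|_{L^\infty L^2}^{2/p}$. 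With this correction the rest of your argument goes through and coincides with the paper's proof.
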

\begin{proof}
To localize the pressure equation,
we invoke the usual cut-off function $\phi\in C^{\infty}_{0}(B(\f{\rho}{2}))$ such that $\phi\equiv1$ on $B(\f{ \rho}{2})$ with $0\leq\phi\leq1$,
$|\nabla\phi |\leq C\rho^{-1} $ and $ ~|\nabla^{2}\phi |\leq
C\rho^{-2}.$

By applying  the incompressible condition, the pressure equation can be  reformulated as
$$
\partial_{i}\partial_{i}(\Pi\phi)=-\phi \partial_{i}\partial_{j} U_{i,j}
+2\partial_{i}\phi\partial_{i}\Pi+\Pi\partial_{i}\partial_{i}\phi
,$$
where $U_{i,j}=(u_{j}- \bar{u}_{{\rho}})(u_{i}-\bar{u}_{{\rho}})$. Before going further, we denote  $\Phi$  stands for the standard normalized fundamental solution of Laplace equation in $\mathbb{R}^{3}$. Then, there holds, for $x\in B(\f{3\rho}{3})$,
\be\ba\label{pp}
\Pi(x)=&\Phi\ast \{-\phi \partial_{i}\partial_{j} U_{i,j}
+2\partial_{i}\phi\partial_{i} \Pi + \Pi \partial_{i}\partial_{i}\phi
\}\\
=&-\partial_{i}\partial_{j}\Phi \ast (\phi  U_{i,j} )\\
&+2\partial_{i}\Phi \ast(\partial_{j}\phi U_{i,j} )-\Phi \ast
(\partial_{i}\partial_{j}\phi U_{i,j} )\\
&  +2 \partial_{i}\Phi \ast(\partial_{i}\phi \Pi) -\Phi \ast(\partial_{i}\partial_{i}\phi \Pi)\\
 =:  &P_{1}(x)+P_{2}(x)+P_{3}(x).
\ea\ee
Noting the fact that $\phi(x)=1, $ where $x\in B(\mu)$
  ($0<\mu\leq\f{\rho}{4} $), we know that
\[
\Delta(P_{2}(x)+P_{3}(x))=0.
\]
 According to the interior  estimate of harmonic function
and the H\"older  inequality, we thus have, for every
$ x_{0}\in B(\f{\rho}{8} )$,
\be\ba
| (P_{2}+P_{3})(x_{0})|&\leq \f{C}{\rho^{4}}\|(P_{2}+P_{3})\|_{L^{1}(B_{x_{0}}(\f{\rho}{8}))}
\\
&\leq \f{C}{\rho^{4}}\|(P_{2}+P_{3})\|_{L^{1}(B(\f{\rho}{4}))}\\
&\leq \f{C}{\rho^{4}}\rho^{3(1-\f{1}{q})} \|(P_{2}+P_{3})\|_{ L^{3/2} (B(\f{\rho}{4}))}.\label{lem2.4.2.15}
\ea\ee
We infer from \eqref{lem2.4.2.15} that
\be \|  P_{2}+P_3 \|^{3/2}_{L^{\infty}(B(\f{\rho}{8}))}\leq C \rho^{-9/2}\|P_2+P_3\|^{3/2}_{L^{3/2}(B(\f{\rho}{4}))} .\label{lem2.4.2.16}\ee
Using the  mean value theorem  and \eqref{lem2.4.2.16} , for any  $\mu\leq \f{\rho}{4}$, we arrive at
\be\ba\label{lem2.4.2.17}
\|(P_{2}+P_{3})-\overline{(P_{2}+P_{3})}_{\mu}\|^{3/2}_{L^{3/2}(B(\mu))}\leq&
C\mu^{3} \|(P_{2}+P_{3})-\overline{(P_{2}+P_{3})}_{\mu}\|^{ 3/2 }_{L^{\infty}(B(\mu))}\\
\leq& C
\mu^{3} (2\mu)^{ 3/2 }\|\nabla (P_{2}+P_{3})\|^{ 3/2 }_{L^{\infty}(B(\f{\rho}{4 }))}\\
\leq& C\Big(\f{\mu}{\rho}\Big)^{\f{9}{2}}\|(P_{2}+P_{3})\|^{3/2}_{L^{3/2}
(B(\f{\rho}{4}))}.
\ea\ee
By time integration, we get
$$
\|(P_{2}+P_{3})-\overline{(P_{2}+P_{3})}_{\mu}\|
^{\f{3}{2}}_{L^{\f{3}{2}}(Q(\mu))}\leq C\Big(\f{\mu}{\rho}\Big)^{ \f{9}{2}}
\|(P_{2}+P_{3})\|^{\f{3}{2}}_{L^{\f{3}{2}}(Q(\f{\rho}{4}))}.
$$
As $(P_{2}+P_{3})-((P_{2}+P_{3}))_{B(\f{\rho}{4})}$ is also a Harmonic function  on $B(\f{\rho}{4})$, we deduce taht
$$\ba
&\|(P_{2}+P_{3})-\overline{(P_{2}+P_{3})}_{\mu}\|
^{\f{3}{2}}_{L^{3/2}(Q(\mu))}
\\
\leq & C\Big(\f{\mu}{\rho}\Big)^{ \f{9}{2}}
\|(P_{2}+P_{3})-\overline{(P_{2}+P_{3})}_{\f{\rho}{4}}\|
^{\f{3}{2}}
_{L^{\f{3}{2}}(Q(\f{\rho}{4}))}.
\ea$$
The triangle inequality  further allows us to get
$$\ba
&\|(P_{2}+P_{3})-\overline{(P_{2}+P_{3})}_{(\f{\rho}{4})}\|_{L^{\f{3}{2}}(Q(\f{\rho}{2\sqrt{6}}))}\\
\leq& \|\Pi-\overline{\Pi}_{(\f{\rho}{4})}\|_{L^{\f{3}{2}}(Q(\f{\rho}{4}))}
+\|P_{1}-\overline{P_{1}}_{(\f{\rho}{4})}\|_{L^{\f{3}{2}}(Q((\f{\rho}{4})))}
\\
\leq& C \| \Pi-\overline{\Pi}_{\rho} \|_{L^{\f{3}{2}}(Q(\f{\rho}{4}))}
+C\|P_{1}\|_{L^{\f{3}{2}}(Q(\f{\rho}{4}))},
\ea$$
which in turn yields that
\be\label{p2rou}\ba
&\|(P_{2}+P_{3})-\overline{(P_{2}+P_{3})}_{\mu}\|
^{\f{3}{2}}_{L^{\f{3}{2}}(Q(\mu))}\\
\leq& C\Big(\f{\mu}{\rho}\Big)^{ \f{9}{2}}\Big(\|\Pi-\overline{\Pi}
_{(\rho)}\|^{\f{3}{2}}_{L^{\f{3}{2}}(Q(\f{\rho}{4}))}
+\|P_{1}\|^{\f{3}{2}}_{L^{\f{3}{2}}(Q(\f{\rho}{4}))}\Big)
\\
\leq& C\Big(\f{\mu}{\rho}\Big)^{ \f{9}{2}}\Big(\|\Pi-\overline{\Pi}
_{(\rho)}\|^{\f{3}{2}}_{L^{\f{3}{2}}(Q(\rho))}
+\|P_{1}\|^{\f{3}{2}}_{L^{\f{3}{2}}(Q(\f{\rho}{4}))}\Big).
\ea
\ee
By virtue of the H\"older inequality and the argument in \eqref{key2.9}, we get
$$\ba
&\iint_{Q(\rho/2)}|u-\bar{u}_{ \rho}|^{3}dxds\\
 \leq& C \|u-  \bar{u}_{ \rho}\|_{L^{p}L^{q,\infty}(Q(\rho))}  \|\nabla u\|_{L^{2} (Q(\rho))}^{2-\f{2}{p} } \|u\|^{\f{2}{p}}_{L^{\infty}L^{2}(Q(\rho))}.
 \ea$$
The classical Calder\'on-Zygmund Theorem and the latter inequality implies that
\be\label{lem2.4.2}\ba
\iint_{Q(\f{\rho}{4})}|P_{1}(x)|^{\f{3}{2}}dxds
\leq& C \iint_{Q(\f{\rho}{2})}|u-\overline{u}_{\rho }|^{3}
dx\\
\leq&  C  \|u-\overline{u}_{\rho }\|_{L^{p}L^{q}(Q(\rho))}  \|\nabla u\|_{L^{2} (Q(\rho))}^{2-\f{2}{p} } \|u\|^{\f{2}{p}}_{L^{\infty}L^{2}(Q(\rho))},
 \ea\ee
and
\be\label{lem2.4.3}\ba
\iint_{Q(\mu)}|P_{1}(x)|^{\f{3}{2}}dx& \leq C  \|u- \overline{u}_{\rho }\|_{L^{p}L^{q,\infty}(Q(\rho))}  \|\nabla u\|_{L^{2} (Q(\rho))}^{2-\f{2}{p} } \|u\|^{\f{2}{p}}_{L^{\infty}L^{2}(Q(\rho))}.
 \ea\ee
The  inequalities \eqref{p2rou}-\eqref{lem2.4.3} allow  us to deduce that
\be\label{lem2.3}
\ba
\iint_{Q(\mu)}|\Pi-\Pi_{\mu}|^{\f{3}{2}}dxds \leq& C\iint_{Q(\mu)}
|P_{1}-(P_{1})_{\mu}|^{\f{3}{2}}
+ \big|P_{2}+P_{3}-(P_{2}+P_{3})_{\mu}\big|^{\f{3}{2}} dx \\
\leq& C \|u- \overline{u}_{\rho }\|_{L^{p}L^{q,\infty}(Q(\rho))}  \|\nabla u\|_{L^{2} (Q(\rho))}^{2-\f{2}{p} } \|u\|^{\f{2}{p}}_{L^{\infty}L^{2}(Q(\rho))}
 \\
& +C\left(\f{\mu}{\rho}\right)
^{\f{9}{2}}\int_{B(\rho)}|\Pi-\Pi_{\rho}|^{\f{3}{2}}.
\ea
\ee
We readily get
\be\label{lem2.42}
\ba
\f{1}{\mu^2}\iint_{Q(\mu)}|\Pi-\Pi_{\mu}|^{\f{3}{2}}
\leq&   C\f{1}{\mu^2} \|u- \overline{u}_{\rho }\|_{L^{p}L^{q,\infty}(Q(\rho))}  \|\nabla u\|_{L^{2} (Q(\rho))}^{2-\f{2}{p} } \|u\|^{\f{2}{p}}_{L^{\infty}L^{2}(Q(\rho))}\\
& +C\left(\f{\mu}{\rho}\right)
^{\f{5}{2}}
\f{1}{\rho^{2}}\iint_{Q(\rho)}|\Pi-\overline{\Pi}_{\rho}|^{\f{3}{2}}dx,
\ea
\ee
which leads to
\begin{align}
D_{3/2}(\mu)\leq &
C\left(\f{\rho}{\mu}\right)
^{2} E_{p;q,\infty}(\rho)E_{\ast}(\rho)^{1-\f{1}{p}} E^{\f{1}{p}}(\rho)
+C\left(\f{\mu}{\rho}\right)^{\f{5}{2}}D_{3/2}(\rho).\label{presure4}
\end{align}
The proof of this lemma is   completed.
 \end{proof}

To prove Theorem  \ref{the1.5}, we write
$$\ba
&E_{p;q,l}(\varrho)=\varrho^{-[(3-q)\f{p}{q}+2]}\|u\|^{p}_{L^{p}
(-\varrho^{2},0;L^{q,l}(B(\varrho)))},\\
&D_{p;q,l}(\varrho)=\varrho^{-[(3-2q)\f{p}{q}+2]}\|\Pi\|^{p/2}_{L^{p}
( -\varrho^{2},0;L^{q,l}(B(\varrho)))}.\ea
$$
The following result in  Lebesgue spaces is due to \cite{[WZ]}. Here, we generalize it to allow the space direction belonging to Lorentz spaces.
\begin{lemma}\label{presure2}
For $0<8\mu\leq \rho$, there exists an absolute constant $C$  independent of $\mu$ and $\rho$ such that
\begin{align}
  \label{2.29}
D_{\f{p}{2};\f{q}{2},l}(r)\leq C\big(\frac{\rho}{r}\big)^{\frac{3 p}{q}+2-p}E_{p;q,l}(\varrho)
+C\big(\frac{r}{\rho}\big)^{(2-\frac{4}{p})\f{p}{2}}
D_{\f{p}{2};\f{q}{2},l}(\rho).
\end{align}
\end{lemma}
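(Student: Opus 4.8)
The plan is to mimic the classical pressure-decomposition estimate from \cite{[CKN]} and \cite{[WZ]}, but carrying Lorentz-norm bookkeeping in the spatial variable throughout, exactly as was done in the proof of Lemma \ref{presure}. First I would fix a cut-off function $\phi\in C_0^\infty(B(\rho/2))$ with $\phi\equiv 1$ on $B(\rho/4)$, $|\nabla\phi|\le C\rho^{-1}$, $|\nabla^2\phi|\le C\rho^{-2}$, and localize the pressure equation $\Delta\Pi=-\partial_i\partial_j(u_iu_j)$ to obtain, for $x\in B(3\rho/8)$, the Newtonian-potential representation
\be
\Pi=\Phi\ast\{-\phi\,\partial_i\partial_j(u_iu_j)+2\partial_i\phi\,\partial_i\Pi+\Pi\,\partial_i\partial_i\phi\}
=: P_1+P_2+P_3,
\ee
where $P_1=-\partial_i\partial_j\Phi\ast(\phi u_iu_j)$ collects the double-Riesz-transform term and $P_2+P_3$ collects the lower-order terms, which are harmonic on $B(\mu)$ for $\mu\le\rho/4$.

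Next I would treat the two pieces separately, parallelling \eqref{p2rou}--\eqref{lem2.4.3}. For the harmonic part $P_2+P_3$: interior estimates for harmonic functions give sup-bounds on $B(\rho/8)$ controlled by the $L^{q/2,l}$-norm on $B(\rho/4)$ (using the inclusion \eqref{Inclusion} to pass from the Lorentz norm on a bounded ball to $L^1$), and then the mean-value theorem upgrades this to the oscillation estimate
$$
\|(P_2+P_3)-\overline{(P_2+P_3)}_\mu\|_{L^{q/2,l}(B(\mu))}\le C\Big(\tfrac{\mu}{\rho}\Big)^{1+6/q}\|(P_2+P_3)-\overline{(P_2+P_3)}_{\rho/4}\|_{L^{q/2,l}(B(\rho/4))},
$$
after which the triangle inequality and the harmonicity of $(P_2+P_3)-\overline{(\cdot)}_{\rho/4}$ reduce the right side to $\|\Pi-\overline\Pi_\rho\|_{L^{q/2,l}(B(\rho))}+\|P_1\|_{L^{q/2,l}(B(\rho/4))}$. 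For $P_1$, the boundedness of the Riesz transform in Lorentz spaces \eqref{brl} (applied iteratively, since $1<q/2<\infty$ is forced by $3<q<\infty$) gives $\|P_1\|_{L^{q/2,l}(B(\rho/4))}\le C\|u\,u\|_{L^{q/2,l}(B(\rho/2))}$, and then the Hölder inequality in Lorentz spaces bounds $\|u_iu_j\|_{L^{q/2,l}}\le C\|u\|_{L^{q,l}}^2$. Taking the $L^{p/2}$-norm in time of the resulting spatial inequalities and keeping careful track of the powers of $\rho$ and $\mu$ (the scaling exponents are dictated by the definitions of $D_{p/2;q/2,l}$ and $E_{p;q,l}$), one arrives at \eqref{2.29}.

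The main obstacle I anticipate is purely bookkeeping: getting the homogeneity exponents to come out exactly as $\frac{3p}{q}+2-p$ and $(2-\frac{4}{p})\frac{p}{2}$. Because we are working with $p/2$ powers in time and squaring $u$ in space, each factor of $\rho$ from the cut-off derivatives, each factor from the Lorentz-norm inclusion \eqref{Inclusion} on $B(\rho)$, and each factor from the scaling of $\|\cdot\|_{L^{q,l}}$ under dilation must be tallied correctly; the dimensionless quantities $E_{p;q,l}$, $D_{p/2;q/2,l}$ have been defined precisely so that this works, so the exponent on the harmonic term should be $(1+6/q)\cdot\frac{p}{2}$ minus the scaling correction, which simplifies to $(2-\frac{4}{p})\frac p2$ under the constraint $2/p+3/q=1$. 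A secondary technical point is that \eqref{Inclusion} requires a bounded domain, so one should localize to $B(\rho)$ before invoking it, and the Riesz-transform bound \eqref{brl} is on $\mathbb R^3$, so it is applied to the compactly supported function $\phi u_iu_j$ and the result restricted back to the small ball. Once these scalings are verified, combining the $P_1$ bound with the $P_2+P_3$ oscillation bound and dividing through by the appropriate power of $r$ yields \eqref{2.29}.
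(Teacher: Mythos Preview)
Your plan follows the template of Lemma~\ref{presure} too literally, and in doing so it proves the wrong inequality. Look again at the definition just before Lemma~\ref{presure2}:
\[
D_{\tfrac{p}{2};\tfrac{q}{2},l}(\varrho)=\varrho^{-[(3-q)\frac{p}{q}+2]}\|\Pi\|^{p/2}_{L^{p/2}(-\varrho^{2},0;L^{q/2,l}(B(\varrho)))}.
\]
There is no mean subtracted here, unlike $D_{3/2}(\varrho)$. So your oscillation estimate $\|(P_2+P_3)-\overline{(P_2+P_3)}_\mu\|_{L^{q/2,l}(B(\mu))}\le C(\mu/\rho)^{1+6/q}\|\cdots\|$, obtained via the mean-value theorem / gradient bound, controls $\|\Pi-\bar\Pi_\mu\|$, not $\|\Pi\|$; you cannot recover the mean afterwards, and the right-hand side you produce likewise involves $\|\Pi-\bar\Pi_\rho\|$ rather than $\|\Pi\|$. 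The quantity you end up bounding is simply not $D_{p/2;q/2,l}$.

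The fix is to drop the oscillation step entirely. For the harmonic piece one uses the interior sup-estimate (not the gradient estimate) to get
\[
\|P_2+P_3\|_{L^{q/2,l}(B(\mu))}\le C\mu^{6/q}\|P_2+P_3\|_{L^\infty(B(\rho/8))}\le C\Big(\tfrac{\mu}{\rho}\Big)^{6/q}\|P_2+P_3\|_{L^{q/2,l}(B(\rho/4))},
\]
then the triangle inequality against $\|\Pi\|_{L^{q/2,l}}+\|P_1\|_{L^{q/2,l}}$ (no means). After the $L^{p/2}$ time integration and normalization, the decay factor $(\mu/\rho)^{6/q\cdot p/2}=(\mu/\rho)^{3p/q}$ combines with the scaling weight $\mu^{-(3p/q-p+2)}/\rho^{-(3p/q-p+2)}$ to give $(\mu/\rho)^{p-2}=(\mu/\rho)^{(2-4/p)p/2}$, and the $P_1$ contribution gives $(\rho/\mu)^{3p/q+2-p}E_{p;q,l}(\rho)$. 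Your claimed exponent $(1+6/q)\tfrac{p}{2}$ does not reduce to $(2-4/p)\tfrac{p}{2}$ under $2/p+3/q=1$ (it equals $3p/2-2$, not $p-2$), which is the arithmetic symptom of the same structural error.
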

\begin{proof}
We still use the representation of pressure $\Pi$ in \eqref{pp}.
  The interior  estimate of harmonic function
and H\"older's inequality ensures that, for every
$ x_{0}\in B(\rho/8)$,
$$\ba
|  (P_{2}+P_{3})(x_{0})|&\leq \f{C}{\rho^{3}}\|(P_{2}+P_{3})\|_{L^{1}(B_{x_{0}}(\rho/8))}
\\
&\leq \f{C}{\rho^{3}}\|(P_{2}+P_{3})\|_{L^{1}(B(\rho/4))}\\
&\leq \f{C}{\rho^{3}}\rho^{3(1-\f{1}{q})} \|(P_{2}+P_{3})\|_{L^{q,l}(B(\rho/4))}
,
\ea$$
which in turn implies
$$\|  P_{2}+P_{3}\|^{q}_{L^{\infty}(B(\rho/8))}\leq C \rho^{-3}\|  P_{2}+P_{3}\|^{q}_{L^{q,l}(B(\rho/4))}.$$
The latter inequality  leads to, for any  $\mu\leq \f{1}{8}\rho$,
\be\ba\label{needinlb}
\|(P_{2}+P_{3})\|_{L^{q,l}(B(\mu))}\leq&
C\mu^{\f{3}{q}} \|(P_{2}+P_{3})\|_{L^{\infty}(B(\mu))}\\
\leq& C
\mu^{\f{3}{q}}   \| (P_{2}+P_{3})\|_{L^{\infty}(B(\rho/8))}\\
\leq& C\Big(\f{\mu}{\rho}\Big)^{\f{3}{q}}\|(P_{2}+P_{3})\|_{L^{q,l}
(B(\rho/4))}.
\ea\ee
Integrating   in time on
$(-\mu^{2 },\,0)$  , we infer that
$$
\|P_{2}+P_{3}
\|_{L^{p}L^{q,l}(Q(\mu))}\leq C\Big(\f{\mu}{\rho}\Big)^{\f{3}{q} }
\|(P_{2}+P_{3})\|_{L^{p}L^{q,l} (Q(\rho/4))}.
$$
In the light  of   the triangle inequality, we have
$$\ba
&\|(P_{2}+P_{3}) \|_{L^{p}L^{q,l}(Q(\rho/4))}
\leq  C(\|\Pi\|_{L^{p}L^{q,l}(Q(\rho/4))}
+\|P_{1} \|_{L^{p}L^{q,l}(Q(\rho/4))}).
\ea$$
As a consequence, we obtain
\be\label{p2rou2}\ba
 \|(P_{2}+P_{3}) \|
 _{L^{p}L^{q,l}(Q(\mu))}
\leq  C\Big(\f{\mu}{\rho}\Big)^{^{\f{3}{q}}}\Big(\|\Pi\|_{L^{p}L^{q,l}(Q(\rho))}
+\|P_{1}\|_{L^{p}L^{q,l}(Q(\rho/4))}\Big).
\ea
\ee
Due to the boundedness of Riesz Transform \eqref{brl} in Lorentz spaces, one has
\be\label{lem2.4.22}\ba
\|P_{1}\|_{L^{q,l}(B(\rho/4))}
\leq& C \||u|^{2}\|_{L^{q,l}(B(\rho/2))},
 \ea\ee
and
\be\label{lem2.4.32}\ba
\|P_{1}\|_{L^{q,l}(B(\mu))}
\leq& C \||u|^{2}\|_{L^{q,l}(B(\rho/2))}.
 \ea\ee
Using \eqref{p2rou2}-\eqref{lem2.4.32}, we zchieve
\be\label{lem2.32}
\ba
\|\Pi \|^{p}_{L^{p}L^{q,l}(Q(\mu))} \leq& C\|P_{1} \|^{p}_{L^{p}L^{q,l}(Q(\mu))} +\|P_{2}+P_{3} \|^{p}_{L^{p}L^{q,l}(Q(\mu))}
 \\
\leq& C \||u|^{2}\|^{p}_{L^{p}L^{q,l}(Q(\rho/2))}
  +C\left(\f{\mu}{\rho}\right)^{\f{3 p}{q}} \|\Pi \|^{p}_{L^{p}L^{q,l}(Q(\rho))}.
\ea
\ee
We thus get
\be\label{lem2.422}
\ba
\mu^{-p(-2+\f3q+\f2p)}\|\Pi \|^{p}_{L^{p}L^{q,l}(Q(\mu))} \leq& C \B(\f{\rho}{\mu}\B)^{p(-2+\f3q+\f2p)}\rho^{-p(-2+\f3q+\f2p)}\|u\|^{2p}_{L^{2p}L^{2q,2l}(Q(\rho/2))}
  \\&+C\left(\f{\mu}{\rho}\right)^{p(2-\f{2}{p})} \rho^{-p(-2+\f3q+\f2p)}\|\Pi \|^{p}_{L^{p}L^{q,l}(Q(\rho))}.
  \ea
\ee
which  means \eqref{2.29}.
The proof of this lemma is   completed.
\end{proof}
\section{$\varepsilon$-regularity criteria  in Lorentz spaces}
In this section, with the decay estimates in Lemma  \ref{ineq} and Lemma \ref{presure}, we begin with the proof of Theorem \ref{them1.1}.
\begin{proof}[Proof  of \eqref{them1.1} in Theorem \ref{the1.1}]
We derive
 from \eqref{u1}  that
 there is a constant $\varrho_0$ such that, for any $\varrho\leq \varrho_{0}$,
$$
 \varrho^{-1 }
\|u-\overline{ u_{\varrho} }\|_{L_{t}^{p}L_{x}^{q,\infty} (Q(\varrho))} \leq\varepsilon_{1}.
$$
Taking advantage of  the Young inequality and local energy inequality \eqref{loc}, we find
\be\label{eq:88}\ba
E(\rho)+E_{\ast}(\rho)\leq& C\Big[E^{2/3}_{3}(2\rho)+E_{3}(2\rho)
+ D_{3/2} (2\rho)\Big]\\
\leq& C\Big[1+E_{3}(2\rho)
+ D_{3/2} (2\rho)\Big].
\ea\ee
 \eqref{ineq2/2} in Lemma \ref{ineq} and  \eqref{eq:88} guarantees that, for
$2 \mu\leq\rho$,
\be\ba
E_{3}(\mu)\leq&C \left(\dfrac{\rho}{\mu}\right)^{2}
 E_{p;q,\infty} (\rho/2) E_{\ast}(\rho/2)^{1-\f{1}{p}} E^{\f{1}{p}}(\rho/2)
    +C\left(\dfrac{\mu}{\rho}\right)E_{3}(\rho/2)\\
    \leq&C \left(\dfrac{\rho}{\mu}\right)^{2}
 E_{p;q,\infty} (\rho/2)\B( 1+E_{3}(\rho)
+ D_{3/2} (\rho)\B)
    +C\left(\dfrac{\mu}{\rho}\right)E_{3}(\rho/2)
\\
    \leq&C \left(\dfrac{\rho}{\mu}\right)^{2}
 E_{p;q,\infty} (\rho )\B( 1+E_{3}(\rho)
+ D_{3/2} (\rho)\B)
    +C\left(\dfrac{\mu}{\rho}\right)E_{3}(\rho ).\label{3.2}    \ea\ee
  Moreover,   \eqref{pe} in Lemma \ref{presure} states that, for $8 \mu\leq\rho$,
\be\label{3.3}
D_{3/2}(\mu)\leq
C\left(\f{\rho}{\mu}\right)
^{2} E_{p;q,\infty} (\rho)\B(1+E_{3}(\rho)
+D_{3/2}(\rho)\B)
+C\left(\f{\mu}{\rho}\right)^{\f{5}{2}}D_{3/2}(\rho).
\ee
For notational convenience, we define
$$F(\mu)= E_{3}(\mu)+ D_{3/2}(\mu).$$
Then, thanks to \eqref{3.2}  and \eqref{3.3}, we find that
$$\ba
F(\mu)\leq&
C
\left(\dfrac{\rho}{\mu}\right)^{2}
 E_{p;q,\infty} (\rho)F(\rho)
    +C \left(\dfrac{\rho}{\mu}\right)^{2}
 E_{p;q,\infty} (\rho)
+C
\left(\dfrac{\mu}{\rho}\right)
 F(\rho)\\
\leq & C_{1}\lambda^{-2}\varepsilon_{1} F(\rho)+
C_{2}\lambda^{-2} \varepsilon_{1}  +C_{3}\lambda F(\rho),
\ea$$
 where  $\lambda=\f{\mu}{\rho}\leq \f{1}{8\sqrt{6}}$ and $\rho\leq   \varrho_{0}  $.\\
Choosing $\lambda,~\varepsilon_{1}$ such that $ \theta =2C_{3}\lambda<1$ and $\varepsilon_{1}=\min\{ \f{ \theta \lambda^{2}}{2C_{1}} ,
 \f{(1- \theta )\lambda^{2}\varepsilon}{2C_{2}\lambda^{-2}}
 \}$ where $\varepsilon$ is the constant in \eqref{optical}, we see that
\be\label{iter}
F(\lambda\rho)\leq  \theta F(\rho)+C_{2}\lambda^{-2} \varepsilon_{1}. \ee
By iterating  $\eqref{iter}$, we readily   get
 \[
F(\lambda^{k}\rho)\leq  \theta ^{k}F(\rho)+\f{1}{2}\lambda^{2}\varepsilon. \]
According to the definition of $F(r)$, for a fixed $\varrho_{0}>0$, we know that there exists a positive number $ K_{0}$~such that
$$ \theta ^{K_{0}}F(\varrho_{0}) \leq \f{M(\|u\|_{L^{\infty}L^{2}},\|u\|_{L^{2}W^{1,2}},
\|\Pi\|_{L^{3/2}L^{3/2}})}{\varrho_{0}^{2}} \theta ^{K_{0}}
\leq\dfrac{1}{2}\varepsilon \lambda^{2}.$$
We denote $\varrho_{1}:=\lambda^{K_{0}}\varrho_{0}$. Then,  for all $0<\varrho\leq \varrho_{1}$ , $\exists k\geq
K_{0}$,~such that $\lambda^{k+1}\varrho_{0}\leq \varrho\leq \lambda^{k} \varrho_{0}$, there holds
\[
 \begin{aligned}
& E_{3}(\varrho)+D_{3/2}(\varrho)\\
=&\frac{1}{\varrho^{2}}\iint_{Q(\varrho)}|  u|^3dxdt+
\frac{1}{\varrho^{2}}\iint_{Q(\varrho)}|\Pi-\overline{\Pi}_{\varrho}|^{\frac{3}{2}}dxdt\\
\leq&  \frac{1}{(\lambda^{k+1}\varrho_{0})^{2}}\iint_{Q(\lambda^{k}\varrho_{0})}|  u|^3dxdt
 +
\frac{1}{(\lambda^{k+1}\varrho_{0})^{2}}\iint_{Q(\lambda^{k}\varrho_{0})}|\Pi-\overline{\Pi}_{\lambda^{k}\varrho_{0}}|^{\frac{3}{2}}dxdt
\\
\leq & \f{1}{\lambda^{2}}F(\lambda^{k}\varrho_{0}) \\
\leq &\f{1}{\lambda^{2}}( \theta ^{k-K_{0}} \theta ^{K_{0}}
F(\varrho_{0})+\f{1}{2}\lambda^{2}\varepsilon )\\
\leq &\varepsilon.
 \end{aligned}
\]
This together with  \eqref{optical} completes the  proof of  Theorem \ref{the1.1}.
\end{proof}

\begin{proof}[Proof of \eqref{them1.2} in Theorem \ref{the1.1}]

According to   Poincar\'e-Sobolev inequality in Lorentz spaces \eqref{rmk2.1}, we know that
$$\ba
\|u-\overline{u}_{B(\varrho)}\|_{ L_{t}^{p^{\ast}}L_x^{q,\infty} (Q(\varrho))}  &\leq   C \|\nabla u \|_{ L_{t}^{p}L_x^{q,\infty} (Q(\varrho))}, \ea$$ where $$1+\f{3}{p^\ast}=\f{3}{p},~~1<p^{\ast}<3.$$
Thus,
$$\ba
\varrho^{-1}\|u-\overline{u}_{B(\varrho)}\|_{ L_{t}^{p^{\ast}}L_x^{q,\infty} (Q(\varrho))}  &\leq C \varrho^{-1}  \|\nabla u \|_{ L_{t}^{p}L_x^{q,\infty} (Q(\varrho))}.\ea$$
Combining this  and \eqref{u1}, one can complete the proof of \eqref{them1.2} in Theorem \ref{the1.1}.
\end{proof}
The key ingredient of the proof   of \eqref{3inth1.1}  in Theorem \ref{the1.1} is the following lemma.
\begin{lemma}\label{lemma3.1}
Suppose $ w=\text{curl}u \in L^{p}L^{q,\infty}(Q(1))$ with
$ \frac 2p+\frac 3q=3$ and $1 \le q < \infty$. Then, for any $0<\mu\leq\rho/4$, it holds
\begin{align}
  &G(p,q;\mu)\leq C \Big(\frac{\rho}{\mu}\Big)W(p,q;\mu)
  +C \Big(\frac{\mu}{\rho}\Big)^{\frac{3}{s}-1}  G(p,q;\rho),~~s>3.\label{lem3.1}
\end{align}
\end{lemma}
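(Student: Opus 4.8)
The plan is to recover $u$ from its vorticity by a localized Biot--Savart representation and to split off a harmonic part, in the spirit of the pressure decomposition in Lemmas~\ref{presure}--\ref{presure2}. Since $u$ is divergence free, $-\Delta u=\mathrm{curl}\,w$, so I would fix a cut-off $\phi\in C_0^\infty(B(\rho))$ with $\phi\equiv1$ on $B(\rho/2)$ and $|\nabla^k\phi|\le C\rho^{-k}$, and set
\be
V=\nabla\times\bigl[(-\Delta)^{-1}(\phi\,w)\bigr],\qquad h=u-V.
\ee
Then $\Div V=0$ identically and $\Delta V=-\mathrm{curl}(\phi\,w)$; since $\phi\equiv1$ and $\nabla\phi\equiv0$ on $B(\rho/2)$ this gives $\Delta h=0$ on $B(\rho/2)$, i.e. each component of $h$ is harmonic there, while the error terms carrying $\nabla\phi$ are supported in the annulus $B(\rho)\setminus B(\rho/2)$ and will turn out to be lower order. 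On $B(\mu)$ one then writes $u=V+h$ and estimates the two pieces separately, keeping the $L^p_t$ norm throughout.

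For the Biot--Savart piece, $V$ is a convolution whose kernel is comparable to $|z|^{-2}$, which belongs to $L^{3/2,\infty}(\mathbb R^3)$ with a scale-invariant norm; Young's inequality in Lorentz spaces \eqref{young}, together with the boundedness of the Riesz transform \eqref{brl} when a derivative is put on $V$, then bounds $\mu^{-1}\|V-\overline V_\mu\|_{L^p_tL^{q^{*},\infty}_x(Q(\mu))}$ by $C(\rho/\mu)\,\rho^{-1}\|w\|_{L^p_tL^{q,\infty}_x(Q(\rho))}$, where $\tfrac1{q^{*}}=\tfrac1q-\tfrac13$; note that $\tfrac2p+\tfrac3q=3$ forces $\tfrac2p+\tfrac3{q^{*}}=2$, which is exactly the scaling in \eqref{them1.1}, so this term is the $W$-contribution. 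For the harmonic remainder I would use interior/mean-value estimates for the harmonic functions $h-\overline h$ and $\nabla h$ on $B(\rho/2)$ to pass from $B(\mu)$ up to $B(\rho/2)$, move between $L^\infty$ on $B(\mu)$, an intermediate $L^{s}$ with $s>3$, and $L^1$ on $B(\rho/2)$ via the bounded-domain inclusion \eqref{Inclusion}, use Lemma~\ref{pil} on the mean-free parts, and finally replace $h$ by $u-V$ through the triangle inequality; this generates the term $C(\mu/\rho)^{\frac3s-1}G(p,q;\rho)$, while the $W$-part it also produces is dominated by the previous term. Combining the two estimates and unwinding the definitions of $G$ and $W$ yields \eqref{lem3.1}.

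I expect the routine work to be the interior harmonic estimates and the power bookkeeping in $\rho$. The delicate points will be: (i) showing that the cut-off error terms produced by $\nabla\phi$ are genuinely subordinate to the two displayed contributions, which uses the same Lorentz Young/H\"older machinery but with the gains $|\nabla\phi|\lesssim\rho^{-1}$ and the annular support; (ii) choosing the exponents in the Lorentz versions of Young's and H\"older's inequalities so that every weak-type factor (the Biot--Savart kernel in $L^{3/2,\infty}$, the truncated Riesz-type kernels) is admissible, in particular staying inside the range $1<p,q,r<\infty$ required by \eqref{young} and keeping $q^{*}$ finite, which effectively restricts to $1<q<3$ --- the only range compatible with $\tfrac2p+\tfrac3q=3$ and $1<p<\infty$; and (iii) the mild circularity that the harmonic remainder is estimated through $u$ itself, which is why $G(p,q;\rho)$ legitimately appears on the right-hand side --- it is a priori finite for a suitable weak solution once the assumption $w\in L^p_tL^{q,\infty}_x(Q(1))$ has been localized a single time to pass from the vorticity to the velocity.
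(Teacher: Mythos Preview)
Your approach is sound and will yield the conclusion, but it takes a different route from the paper's. The paper works directly at the level of $\nabla u$ rather than $u$: from $\Delta\partial_k u_1 = -(\partial_2\partial_k w_3 + \partial_3\partial_k w_2)$ it writes
\[
\partial_k u_1 \;=\; R_2 R_k(\phi\, w_3) - R_3 R_k(\phi\, w_2) + H(x),
\]
with $H$ harmonic on the inner ball, and then uses only the boundedness of the Riesz transforms on $L^{q,\infty}$ \eqref{brl} for the first two terms together with the interior harmonic estimate (exactly as in \eqref{needinlb}) for $H$, arriving at
\[
\|\nabla u\|_{L^{q,\infty}(B(\mu))} \;\le\; C\|w\|_{L^{q,\infty}(B(\rho))} + C\Big(\tfrac{\mu}{\rho}\Big)^{\alpha}\|\nabla u\|_{L^{q,\infty}(B(\rho))}.
\]
Thus in the paper $G$ is the dimensionless $\nabla u$-quantity and the output is fed into part \eqref{them1.2} of Theorem~\ref{the1.1}. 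Your decomposition instead reconstructs $u$ itself via the Biot--Savart convolution, shifts the spatial Lorentz exponent from $q$ to $q^*$ through Young's inequality \eqref{young}, and feeds into part \eqref{them1.1}. The paper's path is shorter: Riesz boundedness \eqref{brl} avoids both the index shift and the Young machinery, and no separate cut-off commutator bookkeeping is needed. In fact your concern (i) is moot in your own setup too, since $\Delta h=\mathrm{curl}\big((\phi-1)w\big)$ vanishes identically on $B(\rho/2)$ and the annular errors are already absorbed in $h$. Your caution (iii) about the a priori finiteness of $G(p,q;\rho)$ at the starting scale is legitimate; the paper leaves this implicit as well.
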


\begin{proof}
We first recall the Biot-Savart law
$$\Delta u=-\text{curl}\,w.$$
In particular, we note that
$$\Delta u_{1}=-(\partial_{2}w_{3}+\partial_{3}w_{2}). $$
 Direct calculation yields that
$$\Delta \partial_{k}u_{1}=-(\partial_{2}\partial_{k}w_{3}+\partial_{3}\partial_{k}w_{2}). $$
Arguing as we did in \eqref{pp}, we write
$$
\partial_{k}u_{1}=R_{2}R_{k}(\phi w_{3})+R_{3}R_{k}(-\phi w_{2})+H(x),$$
where $H(x)$ is a harmonic function.
A modified version of \eqref{needinlb}, we find that
$$\ba
\|H(x)\|_{L^{q,\infty}(B(\mu))}\leq& C \B(\f{\mu}{\rho}\B)^{3}\|H(x)\|_{L^{q,\infty}(B(\rho))}
\\\leq& C \B(\f{\mu}{\rho}\B)^{3}\|R_{2}R_{k}(\phi w_{3})+R_{3}R_{k}(-\phi w_{2})\|_{L^{q,\infty}(B(\rho))}
\\&+C \B(\f{\mu}{\rho}\B)^{3}\|\partial_{k}u_{1}\|_{L^{q,\infty}(B(\rho))}.
\ea$$
Thus, we further deduce that
$$\ba
\|\partial_{k}u_{1}\|_{L^{q,\infty}(B(\mu))}\leq& C \|R_{2}R_{k}(\phi w_{3})+R_{3}R_{k}(-\phi w_{2})\|_{L^{q,\infty}(B(\mu))}+C\|H(x)\|_{L^{q,\infty}(B(\mu))}
\\\leq& C  \|R_{2}R_{k}(\phi w_{3})\|_{L^{q,\infty}(B(\rho))}+C\|R_{3}R_{k}(-\phi w_{2})\|_{L^{q,\infty}(B(\rho))}
\\&+C \B(\f{\mu}{\rho}\B)^{3}\|\partial_{k}u_{1}\|_{L^{q,\infty}(B(\rho))}.
\ea$$
The boundedness of Riesz Transform \eqref{brl} in Lorentz spaces gives
$$\ba
\|\partial_{k}u_{1}\|_{L^{q,\infty}(B(\mu))} \leq& C\|   w  \|_{L^{q,\infty}(B(\rho))}
+C \B(\f{\mu}{\rho}\B)^{3}\|\partial_{k}u_{1}\|_{L^{q,\infty}(B(\rho))}.
\ea$$
As a consequence, it comes out
$$\ba
\|\nabla u_{1}\|_{L^{q,\infty}(B(\mu))} \leq& C\|   w  \|_{L^{q,\infty}(B(\rho))}
+C \B(\f{\mu}{\rho}\B)^{3}
\|\nabla u_{1}\|_{L^{q,\infty}(B(\rho))}.
\ea$$
This achieves the proof of the desired estimate.
\end{proof}

\begin{proof}[Proof of \eqref{3inth1.1} and \eqref{4inth1.1} in Theorem \ref{the1.1}]
With Lemma \ref{lemma3.1} in hand, the iterative method and the results \eqref{them1.2} in Theorem \ref{the1.1}  help us finish the proof of \eqref{3inth1.1}   in Theorem.  Since there holds generalized Biot-Savart law \eqref{gbsl},  along the same line, we can complete the rest proof.
\end{proof}

\section{Number of potential singular set in lorentz spaces}
In the same way as \cite{[KRZ]}, one can finish the proof of Theorem \ref{the1.4}. For details we refer the reader to
\cite{[KRZ]}. Next, we follow the path of \cite{[WZ]} to prove Theorem \ref{the1.5}.
\begin{proof}[Proof of Theorem \ref{the1.5}]
From remark \ref{remark1.4}, we know that the weak solution $u$ is a suitable weak solution.
We denote $\mathcal{S}(t_0)={(x_1, t_0),(x_1, t_0),\cdots,(x_M,t_0)}$ and $B_{i}(\varrho)=B(x_{i},\varrho)$ in this section.
We can  choose sufficiently small $r_{0}$  to make sure that $B_i(r)\cap B_j(r)=\emptyset$ for $i\neq j$ for all $0<r\leq r_0$.
It suffices to show that
$$M\leq C \|u\|_{L^{p,\infty}(-1,0;L^{q,l}(\mathbb{R}^{3}))},$$
 where  $\f{2}{p}+\f{3}{q}=1 ( 3<p<\infty \text{  and } p\leq l )$.

To simplify the presentation, we also introduce
$$\ba
&E_{p;q,l;i}(\varrho)=\varrho^{-[(3-q)\f{p}{q}+2]}\|u\|^{p}_{L^{p}
(t_{0}-\varrho^{2},t_{0};L^{q,l}(B_{i}(\varrho)))},\\
&D_{p;q,l;i}(\varrho)=\varrho^{-[(3-2q)\f{p}{q}+2]}\|\Pi\|^{p/2}_{L^{p}
(t_{0} -\varrho^{2},t_{0};L^{q,l}(B_{i}(\varrho)))}.\ea
$$
Lemma \eqref{presure} ensures that
\be\ba
D_{\f{p^{\natural}}{2} ;\f{ q}{2}, \f{ l}{2};i}( \theta r_{0})&\leq C_{1}\theta^{-(\frac{3 p^{\natural}}{q}+2-p)}E_{p^{\natural};q,l;i}(r^*)
+C_{2}\theta^{(2-\frac{4}{p^{\natural}})\f{p^{\natural}}{2}}D_{\f{p^{\natural}}{2}; \f{ q}{2}, \f{ l}{2};i} (r_{0})\\
&= \alpha E_{p^{\natural};q,l;i}(r^*)
+ \beta D_{\f{p^{\natural}}{2} ;\f{ q}{2}, \f{ l}{2};i} (r^*).
\ea\ee
As a consequence, one has
\be\ba
D_{\f{p^{\natural}}{2} ;\f{ q}{2}, \f{ l}{2};i}( \theta^{k}r^*)&\leq \alpha\sum_{i=0}^{k-1}\beta^{k-i-1} E_{p^{\natural};q,l;i}(r^*)
+ \beta^{k} D_{\f{p^{\natural}}{2} ;\f{ q}{2}, \f{ l}{2};i} (r^*).
\ea\ee
Recall that  the elementary inequality
\be\sum_{i=1}^{M}b_{i} ^{\gamma}\leq\left\{\ba
&(\sum_{i=1}^{M}b_{i})^{\gamma},~~~ \gamma\leq1.
\\& M^{1-\gamma}(\sum_{i=1}^{M}b_{i})^{\gamma} , ~~~\gamma<1.
\ea\right.\label{ei}\ee
We take into account the case $1\leq\f{p^{\natural}}{l}$.
With the help of \eqref{ei} and the H\"older inequality in Lorentz spaces, we conclude that, for $p^{\natural}<p$,
$$\ba
&\sum_{i=1}^{M}(\theta^{j}r^*)^{-[-p^{\natural}+\f{3 p^{\natural}}{q}+2]}E_{p^{\natural};q,l;i}(\theta^{j}r_{0})\\
=&\sum_{i=1}^{M}\int^{t_{0}}_{t_{0}-(\theta^j r^*)^{2}}\B(\int_{0}^{\infty}\sigma^{l-1}|\{x\in B_{i}( \theta^j  r^* ):|f|>\sigma\}|^{\f1q}\B)^{\f{p^{\natural}}{l}}dt\\\leq
&\int^{t_{0}}_{t_{0}-(\theta^j r^*)^{2}}\B(\sum_{i=1}^{M}\int_{0}^{\infty}\sigma^{l-1}|\{x\in B_{i}( \theta^j  r^* ):|f|>\sigma\}|^{\f1q}\B)^{\f{p^{\natural}}{l}}dt\\\leq
&\int^{t_{0}}_{t_{0}-(\theta^j r^*)^{2}}\B( \int_{0}^{\infty}\sigma^{l-1}|\{x\in \mathcal{S}:|u|>\sigma\}|^{\f1q}\B)^{\f{p^{\natural}}{l}}dt\\\leq
& (\theta^j r^*)^{2 p^{\natural}(\f{1}{p^{\natural}}-\f{1}{p})}\|u\|^{p^{\natural}}_{L^{p,\infty}(t_{0},t_{0}-(\theta^i r^*)^{2};L^{q,l}(\mathbb{R}^{3}))},
\ea$$
This together with $\f{2}{p}+\f{3}{q}=1$   implies that
$$\ba
&\sum_{i=1}^{M} E_{p;q,l;i}(\theta^{j}r_{0}) \leq C\|u\|^{p^{\natural}}_{L^{p,\infty}(-1,0;L^{q,l}(\mathbb{R}^{3}))},
 \ea$$
 Along exactly the same lines, we find
 $$\ba
&\sum_{i=1}^{M}D_{\f{p^{\natural}}{2} ;\f{ q}{2}, \f{ l}{2};i}( \theta^{k}r^*)\leq C\|\Pi\|^{p^{\natural}/2}_{L^{p/2,\infty}(-1,0;L^{q/2,l/2}(\mathbb{R}^{3}))}.
 \ea$$
We derive from \eqref{the1.2} that
$$\varepsilon<E_{p;q,l;i}(\theta^{k}r_{0})+D_{\f{p^{\natural}}{2} ;\f{ q}{2}, \f{ l}{2};i}( \theta^{k}r^*),$$
from which follows that
$$\ba
M\varepsilon&<\sum_{i=1}^{M}E_{p;q,l;i}(\theta^{k}r_{0})
+\sum_{i=1}^{M}D_{\f{p^{\natural}}{2} ;\f{ q}{2}, \f{ l}{2};i}( \theta^{k}r^*) \\
\leq& C\|u\|^{p^{\natural}}_{L^{p,\infty}(-1,0;L^{q,l}(\mathbb{R}^{3}))}  +\sum_{l=1}^M\beta ^kD_{\f{p^{\natural}}{2} ;\f{ q}{2}, \f{ l}{2};i}(  r^*)
+\sum_{l=1}^M\alpha\sum_{i=0}^{k-1}\beta^{k-i-1}E_{p;q,l;i}(\theta^{i}r_{0}) \\
\leq& C\|u\|^{p^{\natural}}_{L^{p,\infty}(-1,0;L^{q,l}(\mathbb{R}^{3}))} +\beta ^kC\|\Pi\|^{p^{\natural}/2}_{L^{p/2,\infty}(-1,0;L^{q/2,l/2}(\mathbb{R}^{3}))}
\\&+\alpha\sum_{i=0}^{k-1}\beta^{k-i-1}
\|u\|^{p^{\natural}}_{L^{p,\infty}(-1,0;L^{q,l}(\mathbb{R}^{3}))}.
\ea$$
Thanks to $\beta<1$, for sufficiently large $k$, we obtain
$$
M\leq C \varepsilon^{-1}C\|u\|^{p^{\natural}}_{L^{p,\infty}(-1,0;L^{q,l}(\mathbb{R}^{3}))}.
$$
It remains to consider the case $ \f{p^{\natural}}{l}<1$. Indeed, there holds $$\ba
&\sum_{i=1}^{M}(\theta^{j}r^*)^{-[-p^{\natural}+\f{3 p^{\natural}}{q}+2]}E_{p^{\natural};q,l;i}(\theta^{j}r_{0}) \\ \leq
& M^{1-\f{p^{\natural}}{l}}(\theta^j r^*)^{2 p^{\natural}(\f{1}{p^{\natural}}-\f{1}{p})}\|u\|^{p^{\natural}}_{L^{p,\infty}(t_{0},t_{0}-(\theta^i r^*)^{2};L^{q,l}(\mathbb{R}^{3}))},
\ea$$
Arguing as we did above, we get
 $$
M\leq C \varepsilon^{-\f{l}{p^{\natural}}} \|u\|^{l}_{L^{p,\infty}(-1,0;L^{q,l}(\mathbb{R}^{3}))}.
$$
The number of singular points is finite under the condition \eqref{1.31}.
\end{proof}
\section*{Acknowledgement}
%The authors thank the anonymous referee and the associated editor for the invaluable
%comments and suggestions which helped to improve the paper greatly.
The authors would like to express their sincere gratitude to Dr. Daoguo Zhou  for short discussion on Lorentz spaces.
Wang was partially supported by  the National Natural
Science Foundation of China under grant (No. 11971446  and No. 11601492)  and the Youth Core Teachers Foundation of  Zhengzhou University of Light Industry.
Wei was partially supported by the National Natural Science Foundation of China under grant ( No. 11601423, No. 11701450, No. 11701451, No. 11771352, No. 11871057) and Scientific Research Program Funded by Shaanxi Provincial Education Department (Program No. 18JK0763).

%\end{CJK*}
\end{document}